\renewcommand{\theequation}{\arabic{section}.\arabic{equation}}
\theoremstyle{plain}
    \newtheorem{Pp}{Proposition}[section]
    \newtheorem{Thm}[Pp]{Theorem}
    \newtheorem{Lm}[Pp]{Lemma}
    \newtheorem*{H1}{Assumption (H1)}
    \newtheorem*{H2}{Assumption (H2)}
    \newtheorem*{H3}{Assumption (H3)}
    \newtheorem*{H4}{Assumption (H4)}
\theoremstyle{definition}
    \newtheorem*{Data}{Hypocoercivity data (D)}
    \newtheorem{Df}[Pp]{Definition}
    \newtheorem{Rm}[Pp]{Remark}
    \newtheorem*{Ass3}{Hypocoercivity assumptions (C1)--(C3)}
\def\D{D}
\def\H{H}
\def\d{\,\mathrm{d}}
\def\df{:=}
\begin{document}

\title[Hypocoercivity for the Langevin dynamics revisited]
      {Hilbert space hypocoercivity for the Langevin dynamics revisited}

%    Information for first author
\author{Martin Grothaus}
\address{Mathematics Department, University of Kaiserslautern,
P.O.Box 3049, 67653 Kaiserslau\-tern, Germany}
%\curraddr{}
\email{grothaus@mathematik.uni-kl.de}
%\thanks{Thank you.}

%    Information for second author
\author{Patrik Stilgenbauer}
\address{Mathematics Department, University of Kaiserslautern,
P.O.Box 3049, 67653 Kaiserslau\-tern, Germany}
\email{stilgenb@mathematik.uni-kl.de}
%\thanks{Thank you very much.}

%    General info
\subjclass[2000]{Primary 37A25; Secondary 47A35}
\date{25/09/2015}
%\dedicatory{This paper is dedicated to you.}
\keywords{Hypocoercivity, exponential rate of convergence,
Langevin dynamics, Kolmogorov equation, operator semigroups,
generalized Dirichlet forms,  hypoellipticity, Poincar\'e
inequality, Fokker-Planck equation.}

\begin{abstract}
We provide a complete elaboration of the $L^2$-Hilbert space
hypocoer\-civity theorem for the degenerate Langevin dynamics
via studying the longtime behavior of the strongly continuous
contraction semigroup solving the associated \hbox{Kolmogorov} (backward)
equation as an abstract Cauchy problem. This hypocoercivity result
is proven in previous works before by Dolbeault, Mouhot and Schmeiser
in the corresponding dual Fokker-Planck framework, but without
including domain issues of the appearing operators.
In our elaboration, we include the domain issues and additionally
compute the rate of convergence in dependence of the damping coefficient.
Important statements for the complete elaboration are the m-dissipativity results
for the Langevin operator established by Conrad and the first named author
of this article as well as the essential selfadjointness results
for generalized Schr\"{o}dinger ope\-ra\-tors by Wielens or Bogachev, Krylov and R\"{o}ckner.
We emphasize that the chosen Kolmogorov approach is natural.
Indeed, techniques from the theory of (genera\-lized) Dirichlet forms
imply a stochastic representation of the Langevin semigroup
as the transition kernel of diffusion process which provides
a martingale solution to the Langevin equation. Hence
an interesting connection between the theory of hypocoercivity
and the theory of (generalized) Dirichlet forms is established besides.
\end{abstract}

\maketitle

\section{Introduction} \label{section_Introduction_Hypo}

In this article we are interested in studying the exponential decay to equilibrium of the classical Langevin dynamics.
The corresponding evolution equation is given by the following stochastic differential equation (SDE)
on $\mathbb{R}^{2d}$, $d \in \mathbb{N}$, as
\begin{align} \label{Langevin_in _Rd_chapter_hypocoercivity}
\mathrm{d}x_t &= \omega_t \, \mathrm{dt}, \\
\mathrm{d}\omega_t &= -\alpha \,\omega_t \, \mathrm{dt} -  \nabla \Psi(x_t)\, \mathrm{dt}
+ \sqrt{\frac{2\alpha}{\beta}}\,  \mathrm{d}W_t, \nonumber
\end{align}
where $\alpha, \beta \in (0,\infty)$, $\Psi \colon \mathbb{R}^{d} \to \mathbb{R}$ is a suitable potential function which needs to be specified later on and $W$ denotes a standard $d$-dimensional Brownian motion. For the national convenience below, we redefine the potential via setting
\begin{align*}
\Phi\df \beta \, \Psi.
\end{align*}
The Langevin equation \eqref{Langevin_in _Rd_chapter_hypocoercivity} describes the evolution of a particle, described by position  $(x_t)_{t \geq 0}$ and velocity coordinates $(\omega_t)_{t \geq 0}$, which is subject to friction, stochastic perturbation and an external forcing term $\nabla \Psi$, see \cite[Ch.~8]{Sch06} and \cite{CKW04} for the background. $\alpha > 0$ is called the damping coefficient. The Kolmogorov generator associated to \eqref{Langevin_in _Rd_chapter_hypocoercivity} is given at first formally by
\begin{align} \label{Eq_generator_Langevin_hypocoercivity}
L=\omega \cdot \nabla_x - \alpha ~ \omega \cdot \nabla_\omega - \frac{1}{\beta} \, \nabla_x \Phi \cdot \nabla_\omega + \frac{\alpha}{\beta}\, \Delta_\omega.
\end{align}
Here $\cdot$ or $(\cdot,\cdot)_{\text{euc}}$ denotes the standard Euclidean scalar product, $\nabla_x$ and $\nabla_\omega$ the usual gradient operators in $\mathbb{R}^{d}$ for the respective $x$- or $\omega$-direction and $\Delta_\omega$ is the Laplace-operator in $\mathbb{R}^{d}$ in the $\omega$-direction. We introduce the measure $\mu_{\Phi,\beta}$ as
\begin{align*}
\mu_{\Phi,\beta} = \frac{1}{\sqrt{2\pi \beta^{-1}}^{d}} ~ e^{- \Phi(x) - \beta \frac{\omega^2}{2}} \,\mathrm{d} x \otimes \mathrm{d} \omega = e^{- \Phi(x)} \,\mathrm{d} x \otimes  \nu_\beta.
\end{align*}
%\begin{align*}
%\mu_{\Phi,\beta} = p_\beta (x,\omega) \,\mathrm{d} x \otimes \mathrm{d} \omega = e^{- \Phi(x)} \,\mathrm{d} x \otimes  \nu_\beta,\quad p_\beta %(x,\omega):=\tfrac{1}{\sqrt{2\pi \beta^{-1}}^{d}} ~ e^{- \Phi(x) - \beta \frac{\omega^2}{2}}.
%\end{align*}
Above $\mathrm{d}x$\index{$\d x$} and $\mathrm{d}\omega$\index{$\d \omega$} denote the Lebesgue measure on $(\mathbb{R}^{d},\mathcal{B}(\mathbb{R}^d))$, $\omega^2\df \omega \cdot \omega$ and $\nu_\beta$ is the normalized Gaussian measure on $\mathbb{R}^{d}$\index{Gaussian measure} with mean $0$ and covariance matrix $\beta^{-1} I$. In case $\mu_{\Phi,\beta}$ is finite, it is up to normalization the canonical invariant measure or canonical stationary equilibrium distribution for the dynamics described by \eqref{Langevin_in _Rd_chapter_hypocoercivity}.

%However, the Langevin equation is degenerate (i.e., the stochastics only acts in the velocity), thus studying the exponential decay to equilibrium and calculating the rate of convergence is non-trivial and provides demanding mathematical challenges. Nevertheless, in recent years functional analytic tools under the name \textit{hypocoercivity} are developed which adress the problem of convergence to equilibrium with rate of convergence for degenerate (non-coercive) evolution equations using entropy methods.

Due to the degenerate structure of the Langevin equation (i.e., the stochastic only acts in the velocity), studying the exponential decay to equilibrium is non-trivial and provides demanding mathematical challenges. Nevertheless, in the last decade many probabilistic and functional analytic tools are developed for studying the exponential longtime behavior of the Langevin dynamics or its associated Fokker-Planck evolution equation; see e.g.~\cite{Wu01}, \cite{MS02}, \cite{HeNi04}, \cite{HN05}, \cite{Her07}, \cite{BCG08}, \cite{Vil09}, \cite{DMS09}, \cite{Mon13}, \cite{Bau13} and \cite{DMS13}.

In the underlying article we are interested in applying functional analytic methods based on \textit{hypocoercivity}. Here the word \textit{hypocoercivity} addresses the study of the exponential convergence to equilibrium of non-coercive evolution equations based on entropy methods and getting quantitative descriptions of the rate, see \cite{Vil09} for the terminology. Our considerations are especially motivated by the result from \cite[Theo.~10]{DMS13} (or see \cite{DMS09}) in which hypocoercivity of the linear kinetic Fokker-Planck equation (with $\alpha=\beta=1$) associated to the Langevin dynamics on the Fokker-Planck Hilbert space
\begin{align*}
H_{\text{FP}}=L^2(F^{-1} \mathrm{d}x \otimes \mathrm{d}\omega),\quad F(x,\omega)
= \frac{1}{\sqrt{2\pi }^{d}} e^{-\Phi(x)} e^{-\frac{\omega^2}{2}}
\end{align*}
is proven. As noticed in \cite{DMS13}, the result from \cite[Theo.~10]{DMS13} is an important improvement to previous hypocoercivity results on the kinetic Fokker-Planck equation since it involves the first $L^2$-setting rather than a Sobolev space $H^1$-setting and moreover, requires weak assumptions on the underlying potential only. The statement \cite[Theo.~10]{DMS13} itself is an application of the abstract Hilbert space method from \cite[Sec.~1.3]{DMS13}. In this abstract method, it was the great idea of Dolbeault, Mouhot and Schmeiser to find a suitable entropy functional, which is equivalent to the underlying Hilbert space norm, for measuring the exponential decay to equilibrium. Consequently, the method \cite[Sec.~1.3]{DMS13} is simple and applies to a wide class of degenerate kinetic equations yielding conditions that are rather easy to verify in the applications of interest.

However, it is worth mentioning, that domain issues of the appearing operators are not included in the hypocoercivity setting from \cite[Sec.~1.3]{DMS13}. Thus computations are established algebraically and formally only therein.  And since \cite[Theo.~10]{DMS13} is an application of \cite[Sec.~1.3]{DMS13}, also the hypocoercivity theorem for the linear kinetic Fokker-Planck equation \cite[Theo.~10]{DMS13} is not yet complete. In order to give a complete elaboration, one needs a rigorous formulation of \cite[Sec.~1.3]{DMS13} first. The desired rigorous formulation of the method from \cite[Sec.~1.3]{DMS13} is given in \cite{GS12B}. The method from \cite{GS12B} contains the required domain issues and conditions for proving hypocoercivity need now only to be verified on a fixed operator core of the evolution operator. Moreover, the extended setting in \cite{GS12B} is suitably reformulated to incorporate also strongly continuous semigroups solving the Kolmogorov equation as an abstract Cauchy problem. In this way, it naturally applies to study the longtime behavior of the dynamics (in terms of transition kernels) of an SDE as will become clear below.

Summarizing, the aim of this article is to give a mathematical complete elaboration of the hypocoercivity statement
for the Langevin dynamics. For this purpose, we make use of our extended Kolmogorov hypocoercivity method from
article \cite{GS12B}. It further turns out that our elaboration requires then also an essential m-dissipativity
result for $(L,C_c^\infty(\mathbb{R}^{2d}))$ established in an article by the first author of this article in
\cite[Cor.~2.3]{CG10} as well as an essential selfadjointness result for
$(\Delta - \nabla \Phi \cdot \nabla,C_c^\infty(\mathbb{R}^d))$ from \cite[Theo.~3.1]{Wie85} or \cite[Theo.~7]{BKR97}.
However, these results from \cite{CG10} and \cite{BKR97} (or \cite{Wie85}) are not used in \cite{DMS13},
but are indispensable for providing a rigorous and complete elaboration. As an additional result,
we further compute the rate of convergence in dependence of the damping coefficient~$\alpha$. The resulting rate
obtained in \eqref{computation_rate_nu_2} confirms interesting phenomena, see Remark \ref{Rm_dependence_of_rate}.
The complete hypocoercivity theorem for the Langevin dynamics that can be achieved in our Kolmogorov setting now
reads as follows.

\begin{Thm} \label{Hypocoercivity_theorem_Langevin}
Let $d \in \mathbb{N}$ and $\alpha,\beta \in (0,\infty)$. Assume that $\Phi \colon \mathbb{R}^d \rightarrow \mathbb{R}$ is bounded from below, satisfies $\Phi \in C^{2}(\mathbb{R}^d)$ and that $e^{-\Phi} \mathrm{d}x$ is a probability measure on $(\mathbb{R}^d,\mathcal{B}(\mathbb{R}^d))$. Moreover, the measure $e^{-\Phi}\mathrm{d}x$ is assumed
to satisfy a Poincar\'e inequality\index{Poincar\'e inequality} of the form
\begin{align*}
\big\|\nabla f \big\|^2_{L^2(e^{- \Phi}\mathrm{d}x)} \geq \Lambda  \, \left\|\, f - \int_{\mathbb{R}^d} f \, e^{- \Phi}\mathrm{d}x \,\right\|^2_{L^2(e^{- \Phi}\mathrm{d}x)}
\end{align*}
for some $\Lambda \in (0,\infty)$ and all $f \in C_c^\infty(\mathbb{R}^d)$. Furthermore, assume that there exists
a constant $c < \infty$ such that
\begin{align*}
\left| \nabla^2 \Phi (x) \right| \leq c \left( 1+ \left| \nabla \Phi(x) \right|\right) \quad \mbox{for all}\quad  x \in \mathbb{R}^d.
\end{align*}
Then the Langevin operator $(L,C_c^\infty(\mathbb{R}^{2d}))$ is closable on $L^2(\mathbb{R}^{2d},\mu_{\Phi,\beta})$ and its closure $(L,D(L))$ generates a strongly continuous contraction semigroup $(T_t)_{t \geq 0}$. In particular, $(T_t)_{t \geq 0}$ provides a classical solution to the abstract Cauchy problem for $(L,D(L))$ in $L^2(\mathbb{R}^{2d},\mu_{\Phi,\beta})$. Moreover, $(T_t)_{t \geq 0}$ even admits a natural stochastic representation as the transition kernel of a diffusion process which provides a martingale (and even a weak) solution to the Langevin equation; see Remark \ref{Rm_stochastic_representation_Langevin_semigroup} for the details. Finally, for each $\nu_1 \in (1,\infty)$ there exists  $\nu_2 \in (0,\infty)$ such that
\begin{align*}
\left\|\,T_t g - \int_{\mathbb{R}^{2d}} g \, \mathrm{d}\mu_{\Phi,\beta} \,\right\|_{L^2(\mathbb{R}^{2d},\mu_{\Phi,\beta})} \leq \nu_1 e^{-\nu_2 \,t}  \left\|\,g - \int_{\mathbb{R}^{2d}} g \, \mathrm{d}\mu_{\Phi,\beta} \,\right\|_{L^2(\mathbb{R}^{2d},\mu_{\Phi,\beta})}
\end{align*}
for all $g \in {L^2(\mathbb{R}^{2d},\mu_{\Phi,\beta})}$ and all $t \geq 0$. Here $\nu_2$ can be specified as
\begin{align} \label{computation_rate_nu_2}
\nu_2 = \frac{\nu_1 - 1}{\nu_1} \,\frac{\alpha}{n_1 + n_2\, \alpha + n_3 \, \alpha^2}
\end{align}
and the constants $n_i \in (0,\infty)$, $i=1,\ldots,3$, only depend on the choice of $\Phi$ and $\beta$.
\end{Thm}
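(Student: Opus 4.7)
The proof decomposes into four parts: generation of the semigroup, setting up the Kolmogorov hypocoercivity framework of \cite{GS12B}, verifying its abstract hypotheses (C1)--(C3), and extracting the rate $\nu_2$ as a function of $\alpha$.

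\textbf{Semigroup and stochastic representation.} First, invoke the essential m-dissipativity result \cite[Cor.~2.3]{CG10}, which asserts precisely under the regularity, Poincaré and Hessian growth hypotheses imposed here that $(L,C_c^\infty(\mathbb{R}^{2d}))$ is closable on $L^2(\mu_{\Phi,\beta})$ and that its closure is m-dissipative. The Lumer--Phillips theorem then produces the strongly continuous contraction semigroup $(T_t)_{t\geq 0}$ with generator $(L,D(L))$; classical solvability of the abstract Cauchy problem on $D(L)$ is automatic, and the stochastic representation is furnished by the generalized Dirichlet form theory to be recalled in Remark~\ref{Rm_stochastic_representation_Langevin_semigroup}.

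\textbf{Kolmogorov hypocoercivity setup.} On the core $C_c^\infty(\mathbb{R}^{2d})$ decompose $L = A + S$, where $A = \omega\cdot\nabla_x - \beta^{-1}\nabla_x\Phi\cdot\nabla_\omega$ is antisymmetric and $S = \alpha\bigl(\beta^{-1}\Delta_\omega - \omega\cdot\nabla_\omega\bigr)$ is symmetric negative semi-definite on $L^2(\mu_{\Phi,\beta})$, as a short integration by parts shows. Let $P$ be the orthogonal projection onto $\omega$-independent functions, $(Pf)(x) = \int_{\mathbb{R}^d} f(x,\omega)\,\mathrm{d}\nu_\beta(\omega)$, and introduce the auxiliary bounded operator $B := (I + (AP)^*(AP))^{-1}(AP)^*$. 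The abstract method of \cite{GS12B} then works with the modified entropy $\mathcal{H}_\varepsilon(f) := \tfrac12\|f\|^2 + \varepsilon\,\langle Bf,f\rangle$ and reduces the whole argument to checking the hypocoercivity hypotheses (C1)--(C3) on $C_c^\infty(\mathbb{R}^{2d})$.

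\textbf{Verification of (C1)--(C3).} The algebraic identity $PAP = 0$ is immediate from $\nabla_\omega Pf \equiv 0$ together with $\int\omega\,\mathrm{d}\nu_\beta = 0$. Microscopic coercivity $-\langle Sf,f\rangle \geq \alpha\,\|(I-P)f\|^2$ is the Gaussian Poincaré inequality for $\nu_\beta$ with its sharp constant $\beta$ (the prefactor $\alpha/\beta$ in $S$ cancels the $\beta$). Macroscopic coercivity $\|APf\|^2 \geq (\Lambda/\beta)\,\|Pf\|^2$ for mean-zero $Pf$ reduces via $APf = \omega\cdot\nabla_x(Pf)$ and the covariance identity $\int\omega_i\omega_j\,\mathrm{d}\nu_\beta = \beta^{-1}\delta_{ij}$ to exactly the assumed Poincaré inequality for $e^{-\Phi}\mathrm{d}x$.

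\textbf{Main obstacle and rate extraction.} The principal difficulty, and precisely the point where the rigorous Kolmogorov treatment departs from the formal computation of \cite{DMS13}, lies in the boundedness estimates $\|BA(I-P)f\|\le c_1\|(I-P)f\|$ and $\|BSf\|\le c_2\|(I-P)f\|$ on the core. The key observation is that, on $\omega$-independent functions $g$, the operator $(AP)^*(AP)$ acts as $\beta^{-1}$ times the generalized Schrödinger operator $-\Delta_x + \nabla_x\Phi\cdot\nabla_x$, whose essential selfadjointness on $C_c^\infty(\mathbb{R}^d)$ is the content of \cite[Theo.~3.1]{Wie85} or \cite[Theo.~7]{BKR97}; this is what makes $B$ well-defined on the core and supplies the weighted $H^2$-regularity needed for the estimates. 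The Hessian growth hypothesis $|\nabla^2\Phi|\le c(1+|\nabla\Phi|)$ then controls the commutators of $A$ with the resolvent of $(AP)^*(AP)$, producing constants $c_1,c_2$ that are independent of $\alpha$, while the microscopic coercivity constant is linear in $\alpha$ and the $S$-block carries an overall factor $\alpha$. Substituting these quantities into the entropy estimate of \cite{GS12B}, applying Young's inequality to absorb the mixed terms, and optimizing the free parameter $\varepsilon$ against the norm equivalence $\tfrac12\|\cdot\|^2 \le \mathcal{H}_\varepsilon \le \tfrac{\nu_1}{2}\|\cdot\|^2$, yields the claimed rate $\nu_2 = \tfrac{\nu_1-1}{\nu_1}\cdot\tfrac{\alpha}{n_1 + n_2\alpha + n_3\alpha^2}$; the rational shape in $\alpha$ reflects the classical competition between friction-driven microscopic dissipation (growing linearly in $\alpha$) and Hamiltonian-driven macroscopic transfer (whose efficiency decays like $\alpha^{-1}$ for large friction).
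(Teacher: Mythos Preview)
Your approach is essentially the same as the paper's: verify the data conditions (D) via \cite[Cor.~2.3]{CG10}, check the hypocoercivity conditions (H1)--(H4) of \cite{GS12B} (what you label (C1)--(C3); in the paper (C1)--(C3) denote the hypotheses on $\Phi$), and then extract the $\alpha$-dependence of the rate by tracking constants through the entropy argument, following \cite{DKMS11}.

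Two points of precision are worth flagging. First, the role of the essential selfadjointness result \cite{Wie85,BKR97} is not to make $B$ well-defined --- $B$ is always a bounded operator on $H$ by von Neumann's theorem --- but rather to verify the requirement in (H3) that $(G,D)=(PA^2P,C_c^\infty(\mathbb{R}^{2d}))$ be essentially selfadjoint on $H$; the paper reduces this (Proposition~\ref{density_of_PAAPD_hypo_Langevin}) to essential selfadjointness of $\Delta-\nabla\Phi\cdot\nabla$ on $L^2(e^{-\Phi}\mathrm{d}x)$. Second, the $BS$ bound is not $\alpha$-free: the paper shows $PAS=\alpha\,PA$ on $D$, giving $c_1=\tfrac{\alpha}{2}$ in (H4) (your implicit factoring of $\alpha$ out of $S$ is equivalent). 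It is precisely this extra $\alpha$ in $c_1+c_2$ that, combined with $\Lambda_m=\alpha$, produces the quadratic denominator $n_1+n_2\alpha+n_3\alpha^2$ in the rate; with both constants $\alpha$-independent one would not recover the $\alpha^{-1}$ decay for large friction.
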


We remark that the conditions on $\Phi$ are mainly adapted from the original (algebraic) elaboration of the hypocoercivity theorem \cite[Theo.~10]{DMS13} in the \textit{dual} situation, i.e., for the linear kinetic Fokker-Planck equation with $\alpha=\beta=1$ on the Hilbert space $H_{\text{FP}}$. We mention that the conditions on $\Phi$ even originally occur in \cite[Theo.~35]{Vil09} where hypocoercivity of the linear kinetic Fokker-Planck equation is established in a suitable Sobolev norm.

As an important point, we emphasize that the usage of our Kolmogorov hypocoercivity method from \cite{GS12B} for our application is completely natural due to the stochastic representation for the Langevin semigroup $(T_t)_{t \geq 0}$ as stated in Theorem \ref{Hypocoercivity_theorem_Langevin}; see also  Remark \ref{Rm_stochastic_representation_Langevin_semigroup}. This stochastic representation result for the Langevin dynamics has been proven in two of the articles from the first named author of the underlying paper, see \cite{CG08} and \cite{CG10}. It is basically implied by using modern tools from the theory of (generalized) Dirichlet forms developed e.g.~in \cite{Fuk94}, \cite{MR92}, \cite{Tru00} or \cite{St99}. %Instead, we remark that corresponding natural representation formulas for the semigroupgiven as probability densities can't be established if one considers the \textit{dual} situation, i.e., if one considers the Fokker-Planck evolution equation as an abstract Cauchy problem on the Hilbert space $H_{\text{FP}}$.
In the \textit{dual} Fokker-Planck situation, instead, we remark that there are no tools in literature available which yield corresponding representation formulas (in terms of probability densities) for the semigroup solving the abstract Fokker-Planck equation on the Hilbert space $H_{\text{FP}}$. Moreover, as explained in \cite[Part I, Sec.~7.4]{Vil09} or \cite[Sec.~2]{MV00}, considering the Fokker-Planck equation in an $L^2$-framework has even no real physical interpretation.

In relevant particle systems coming from Statistical Mechanics and Mathematical Physics, the potential in the Langevin equation is usually singular, e.g.~includes pair interactions of Lennard-Jones type. Discussing these cases is out of the scope of this article. However, it is an interesting problem of future research to establish Theorem \ref{Hypocoercivity_theorem_Langevin} also in such a situation. In this context, we refer to \cite{CG10} and \cite{GS13} in which ergodicity for the so-called $N$-particle Langevin dynamics with singular potentials is proven. The ergodicity method used therein shows up interesting analogies to the hypocoercivity method used in the underlying paper; see \cite{GS13} for details.

This article is organized as follows. In Section \ref{Hypocoercivity_method} we recapitulate our extended abstract Kolmogorov $L^2$-Hilbert space method presented in \cite{GS12B}. Afterwards, see Section \ref{Section_Hypocoercivity_Langevin_equation}, we give the desired complete elaboration of the $L^2$-Hilbert space hypocoercivity theorem for the Langevin dynamics by using our extended setting. Additionally, we calculate the rate of convergence in dependence of the damping coefficient $\alpha \in (0,\infty)$. The results of this article are obtained from the PhD thesis of the second named author; see \cite[Ch.~2]{Sti14}.

\section{The Hilbert space hypocoercivity method} \label{Hypocoercivity_method}

As described in the introduction, in this section we recapitulate the Hilbert space hypocoercivity method from \cite[Sec.~2]{GS12B}. It will be applied later on to establish hypocoercivity of the Langevin dynamics. The  method in \cite[Sec.~2]{GS12B} is a rigorous extension of the original hypocoercivity method from \cite[Sec.~1.3]{DMS13} in which domain issues are not yet included. Moreover, the formulation of the method in \cite[Sec.~2]{GS12B} is made for studying Kolmogorov (backward) evolution equations. Below, $\H$ always denotes a real Hilbert space with scalar product $(\cdot,\cdot)_\H$ and induced norm $\| \cdot \|$. All considered operators are assumed to be linear, defined on linear subspaces of $H$. An operator $(L,D(L))$ with domain $D(L)$ is also abbreviated by $L$. Basic knowledge from the theory of operator semigroups is assumed, see e.g.\cite{Paz83} and \cite{Gol85} for references. The upcoming data conditions (D) are assumed until the end of this section without mentioning this explicitly again.

\begin{Data} \label{Data}
$ $
\begin{itemize}
\item[(D1)] \textit{The Hilbert space.} Let $(E,\mathcal{F},\mu)$ be a probability space and define $\H$ to be $\H=L^2(E,\mu)$ equipped with the usual standard scalar product $\left(\cdot,\cdot\right)_\H$.\smallskip
\item[(D2)] \textit{The $C_0$-semigroup and its generator $L$.} $(L,D(L))$ is a linear operator on $\H$ generating a strongly continuous semigroup $(T_t)_{t \geq 0}$.\smallskip
\item[(D3)] \textit{Core property of $L$.}  Let $\D \subset D(L)$ be a dense subspace of $\H$ which is an operator core for $(L,D(L))$.\smallskip
\item[(D4)] \textit{Decomposition of $L$.} Let $(S,D(S))$ be symmetric and let $(A,D(A))$ be closed and antisymmetric on $\H$ such that $\D \subset D(S) \cap D(A)$ as well as $L_{|\D}=S-A$.\smallskip
\item[(D5)] \textit{Orthogonal projection.} Let $P\colon \H \to \H$ be an orthogonal projection which satisfies $P(\H) \subset D(S)$, $S P=0$ as well as $P(\D) \subset D(A)$, $AP (\D) \subset D(A)$. Moreover, we introduce
$P_{S}\colon \H \to \H$ as
\begin{align*}
P_S f \df P f + \left( f,1 \right)_\H, \quad f \in \H.
\end{align*}
\item[(D6)] \textit{The invariant measure.} Let $\mu$ be invariant for $(L,\D)$ in the sense that
\begin{align*}
\left( Lf,1 \right)_\H = \int_E Lf\, \mathrm{d}\mu=0 \quad \mbox{for all \quad $f \in \D$}.
\end{align*}
\item[(D7)] \textit{Semigroup conservativity.}  $1 \in D(L)$ and $L1=0$.
\end{itemize}
\end{Data}

Now the first three hypocoercivity conditions read as follows.

\begin{H1} \label{H1} (Algebraic relation) Assume that $P A P _{\,| \D}=0$.
\end{H1}

\begin{H2} \label{H2} (Microscopic coercivity) There exists $\Lambda_m > 0$ such that
\begin{align*}
-\left( Sf,f \right)_\H \geq \Lambda_m \, \|(I-P_S)f\|^2 \quad
\mbox{for all}\quad f \in \D.
\end{align*}
\end{H2}

\begin{H3} \label{H3} (Macroscopic coercivity) Define $(G,D)$ by $G=PA^2P$ on $D$. Assume that $(G,D)$ is essentially
selfadjoint on $H$ (or essentially m-dissipative on $H$
equi\-valently). Moreover, assume that there exists $\Lambda_M >
0$ such that
\begin{align} \label{eq_inequality_D3}
\|AP f\|^2 \geq \Lambda_M \|P f \|^2\quad \mbox{for all}\quad f
\in \D.
\end{align}
\end{H3}

In the hypocoercivity setting, one can introduce a suitable bounded linear operator $B$ on $H$ as follows. It is defined as the unique extension of $(B,D((AP)^*))$ to a continuous linear operator on $\H$ where
\begin{align*}
B\df(I+(AP)^* A P)^{-1}(AP)^* \quad \mbox{on}\quad D((AP)^*).
\end{align*}
Here $(AP,D(AP))$ is the linear operator $AP$ with domain
\begin{align*}
D(AP)=\{ f \in H ~|~ Pf \in D(A)\}
\end{align*}
and $((AP)^*,D((AP)^*))$ denotes its adjoint on $H$. Note that by von Neumann's theorem, the operator
\begin{align*}
I+(AP)^* A P \colon D((AP)^* A P) \to H
\end{align*}
with domain $D((AP)^* A P)=\{ f \in D(AP)~|~APf \in D((AP)^*)\}$
is bijective and admits a bounded inverse. Hence $B$ is indeed
well-defined on $D((AP)^*)$. For the fact that $B$ extends to a
bounded operator on $H$, consider the original references stated
above or see \cite[Theo.~5.1.9]{Ped89}. Now let $0 \leq
\varepsilon < 1$ and assume Condition (H1). The \textit{modified
entropy functional $\mathrm{H}_{\varepsilon}[\cdot]$}  is defined
by
\begin{align} \label{Df_H_epsilon}
\mathrm{H}_{\varepsilon}[f] \df \frac{1}{2} \|f\|^2 + \varepsilon \left(Bf,f\right)_\H,\quad f \in \H.
\end{align}
Then one obtains the following relation:
\begin{align} \label{equivalence_H_norm}
\frac{1-\varepsilon}{2} \|f\|^2 \leq \mathrm{H}_\varepsilon[f]
\leq \frac{1+\varepsilon}{2} \|f\|^2\quad \mbox{for all}\quad f
\in \H.
\end{align}

With the help of the previously defined operator $B$, one can introduce the last hypocoercivity condition, see next.

\begin{H4} \label{H4} (Boundedness of auxiliary operators) The operators $(BS,\D)$ and $(BA(I-P),\D)$ are bounded and there exists constants $c_1 < \infty$ and $c_2 < \infty$ such that
\begin{align*}
\|BSf\| \leq c_1 \,\|(I-P)f\|\quad \mbox{and}\quad \|BA(I-P)f\|
\leq c_2 \,\|(I-P)f\| \quad \mbox{for all}\quad  f \in D.
\end{align*}
\end{H4}

In order to verify (H4) for the Langevin dynamics later on, we need the following lemma.

\begin{Lm} \label{Lemma_sufficient_H4}

\begin{itemize}
\item[(i)]
Suppose that Condition (H1) holds. Assume $S(\D) \subset D(A)$ and assume that there exists $c_3 \in \mathbb{R}$ such that
\begin{align*}
PAS = c_3 \,PA \quad \mbox{on}\quad \D.
\end{align*}
Then the first inequality in (H4) holds with $c_1=
\displaystyle\frac{1}{2} |c_3|$.
\item[(ii)]
Assume that $(G,\D)$ is essentially selfadjoint and assume that there exists $c_4 < \infty$ such that
\begin{align} \label{eq_boundedness_BA*}
\|A^2Pf\| \leq c_4 \,\|g\| \quad \mbox{for all}\quad
g=(I-G)f,\quad f \in \D.
\end{align}
Then the second inequality is satisfied with $c_2=c_4$.
\end{itemize}
\end{Lm}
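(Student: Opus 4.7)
The plan is to exploit the representation $B = (I+(AP)^*AP)^{-1}(AP)^*$ on $D((AP)^*)$, together with the elementary observation that every $v \in D(A)$ lies in $D((AP)^*)$ with $(AP)^*v = -PAv$ (by integrating $(APg,v)_H$ by parts against $g \in D(AP)$). This is what lets $B$ act in closed form on vectors in $D(A)$.

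For part (i), set $u \df BSf$. The hypothesis $S(\D) \subset D(A)$ places $Sf \in D((AP)^*)$, and using (H1) to drop $PAPf$ the vector $u$ satisfies
\begin{align*}
(I+(AP)^*AP)\,u = (AP)^*Sf = -PASf = -c_3\,PAf = -c_3\,PA(I-P)f.
\end{align*}
Testing against $u$, using $Pu \in D(A)$ (since $u \in D(AP)$) together with the antisymmetry of $A$, and observing that $(I-P)f \in D(A)$ for $f \in \D$, this collapses to
\begin{align*}
\|u\|^2 + \|APu\|^2 = c_3\,(APu,(I-P)f)_H \leq |c_3|\,\|APu\|\,\|(I-P)f\|.
\end{align*}
Writing $b \df \|APu\|$ and $c \df |c_3|\,\|(I-P)f\|$, the inequality reads $\|u\|^2 \leq bc - b^2 \leq c^2/4$, the last step because $b(c-b) \leq c^2/4$ for every real $b$. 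This is exactly the first inequality in (H4) with constant $\tfrac12|c_3|$.

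For part (ii), I would dualize: for $\phi \in H$ set $k \df (I+(AP)^*AP)^{-1}\phi$, so that $B^*\phi = APk$ and $(BA(I-P)f,\phi)_H = (A(I-P)f,APk)_H$. By (D5) one has $(AP)^*AP|_\D = -G|_\D$, and essential selfadjointness of $G$ on $\D$ then forces $(AP)^*AP$ to be the unique selfadjoint extension of $-G|_\D$, so $\D$ is a core for $(AP)^*AP$. Pick $k_n \in \D$ with $k_n \to k$ and $-Gk_n = (AP)^*APk_n \to (AP)^*APk$; the identity $\|AP(k_n-k)\|^2 = ((AP)^*AP(k_n-k),k_n-k)_H$ together with Cauchy--Schwarz yields $APk_n \to APk$, and clearly $(I-G)k_n \to \phi$. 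For each $n$, (D5) gives $APk_n \in D(A)$, so the antisymmetry of $A$ integrates by parts to
\begin{align*}
(A(I-P)f,APk_n)_H = -((I-P)f,A^2Pk_n)_H,
\end{align*}
and the hypothesis $\|A^2Pk_n\| \leq c_4\,\|(I-G)k_n\|$ together with Cauchy--Schwarz gives the estimate $|(A(I-P)f,APk_n)_H| \leq c_4\,\|(I-P)f\|\,\|(I-G)k_n\|$. Passing to the limit in $n$ and taking the supremum over $\phi$ with $\|\phi\| = 1$ produces $\|BA(I-P)f\| \leq c_4\,\|(I-P)f\|$, which is the second inequality in (H4) with $c_2 = c_4$.

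The principal obstacle is part (ii): the resolvent-level vector $k=(I+(AP)^*AP)^{-1}\phi$ sits in the abstract selfadjoint extension domain, where no direct information about $A^2Pk$ is available, and the hypothesis controls $\|A^2P\cdot\|$ only on the core $\D$. Essential selfadjointness of $G$ is precisely the structural input that, via a graph-norm approximating sequence in $\D$, transfers the core-level estimate into a bound on the resolvent expression. Part (i) is comparatively direct, needing only (H1) to collapse $PAf$ to $PA(I-P)f$ and the antisymmetry of $A$ to reach the pointwise quadratic inequality.
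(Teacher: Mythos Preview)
Your proof is correct. The paper itself does not give a proof of this lemma here; it is stated as part of the recapitulation of the abstract method from \cite{GS12B}, so there is no in-paper argument to compare against directly. Your approach --- computing $(AP)^*v=-PAv$ for $v\in D(A)$ via antisymmetry, then in (i) testing the resolvent identity for $u=BSf$ against itself and optimizing the resulting quadratic, and in (ii) dualizing via $B^*\phi = APk$ with $k=(I+(AP)^*AP)^{-1}\phi$ and using essential selfadjointness of $(G,\D)$ to approximate $k$ in graph norm by elements of $\D$ --- is exactly the natural route, and each step is justified by the data assumptions (D4), (D5), and (H1). The only point worth making explicit is that $(AP,D(AP))$ is closed and densely defined (closedness follows from closedness of $A$ and boundedness of $P$; density from $\D\subset D(AP)$), which you use implicitly when identifying $(AP)^{**}=AP$ in the duality computation $((AP)^*v,k)_H=(v,APk)_H$; you may want to state this.
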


Then, assuming Conditions (D) and (H1)--(H4), the final
hypocoercivity theorem reads as follows.

\begin{Thm} \label{Thm_Hypocoercivity}
Assume that (D) and (H1)--(H4) holds. Then there exists strictly
positive constants $\kappa_1 < \infty$ and $\kappa_2< \infty$
which are computable in terms of $\Lambda_m,~\Lambda_M,~c_1$ and
$c_2$ such that for each $g \in \H$ we have
\begin{align*}
\left\|T_tg - \left(g,1\right)_\H\right\| \leq \kappa_1
e^{-\kappa_2 \,t}  \left\|g - \left(g,1\right)_\H\right\| \quad
\mbox{for all}\quad  t \geq 0.
\end{align*}
Here $(T_t)_{t \geq 0}$ denotes the $C_0$-semigroup introduced in (D2).
\end{Thm}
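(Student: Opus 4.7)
The plan is to carry out the Dolbeault--Mouhot--Schmeiser modified-entropy argument rigorously, exploiting the domain framework (D) and the hypotheses (H1)--(H4). I first reduce to the mean-zero subspace $H_0 = \{h \in H : (h,1)_H = 0\}$: by (D7) and $L1 = 0$, the semigroup satisfies $T_t 1 = 1$, so $T_t g - (g,1)_H = T_t(g - (g,1)_H)$, and it suffices to show $\|T_t h\| \leq \kappa_1 e^{-\kappa_2 t}\|h\|$ for $h \in H_0$. Observe that on $H_0$ the formula in (D5) simplifies to $P_S h = Ph$, so (H2) becomes a coercivity estimate on $(I-P)$, pairing naturally with (H3) on the orthogonal range of $P$.

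Second, I study the evolution of the modified entropy $\mathrm{H}_\varepsilon[u] = \tfrac12\|u\|^2 + \varepsilon(Bu,u)_H$ defined in \eqref{Df_H_epsilon} along $u(t) = T_t h$ for $h \in D \cap H_0$. Because $B$ is bounded (as recalled just after its definition), the map $t \mapsto \mathrm{H}_\varepsilon[u(t)]$ is $C^1$ with
\begin{align*}
\frac{d}{dt}\mathrm{H}_\varepsilon[u(t)] = (Lu(t),u(t))_H + \varepsilon\bigl[(BLu(t),u(t))_H + (Bu(t),Lu(t))_H\bigr].
\end{align*}
Using $L_{|D} = S - A$ from (D4), antisymmetry of $A$, and (H1) $PAP_{|D} = 0$, the DMS algebraic manipulation---applied first to $h \in D$ and then propagated by density, since the resulting bounds only involve bounded operators and closable pieces---recasts this as
\begin{align*}
\frac{d}{dt}\mathrm{H}_\varepsilon[u(t)] = (Su(t),u(t))_H - \varepsilon(BAPu(t),u(t))_H + \varepsilon R(t),
\end{align*}
where the antisymmetric term $(Au,u)_H$ vanishes and $R(t)$ collects the symmetric cross terms built from $(BSu,u)_H$ and $(BA(I-P)u,u)_H$ together with their adjoints.

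Third, each summand is controlled by a single hypothesis. (H2) yields $(Su(t),u(t))_H \leq -\Lambda_m\|(I-P)u(t)\|^2$ on $H_0$. Since $C \df AP$ satisfies $CP = C$, the projection $P$ commutes with $(I+C^*C)^{-1}C^*C$, so the spectral theorem applied to the essentially selfadjoint operator $(G,D) = (PA^2P,D)$ from (H3) together with $\|APf\|^2 \geq \Lambda_M\|Pf\|^2$ gives
\begin{align*}
(BAPu,u)_H \geq \frac{\Lambda_M}{1+\Lambda_M}\|Pu\|^2.
\end{align*}
Finally (H4) controls $|R(t)|$ by a constant multiple of $\|(I-P)u(t)\|\,\|u(t)\|$. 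A Young's-inequality absorption, with $\varepsilon > 0$ chosen small enough that both $\|Pu\|^2$ and $\|(I-P)u\|^2$ receive strictly negative coefficients, produces a Lyapunov bound
\begin{align*}
\frac{d}{dt}\mathrm{H}_\varepsilon[u(t)] \leq -\delta\|u(t)\|^2
\end{align*}
with $\delta > 0$ computable in terms of $\Lambda_m, \Lambda_M, c_1, c_2$. The norm equivalence \eqref{equivalence_H_norm} then turns this into exponential decay of $\mathrm{H}_\varepsilon[u(t)]$ at rate $2\delta/(1+\varepsilon)$, and a second application of \eqref{equivalence_H_norm} yields $\|u(t)\| \leq \kappa_1 e^{-\kappa_2 t}\|h\|$ with $\kappa_1 = \sqrt{(1+\varepsilon)/(1-\varepsilon)}$ and $\kappa_2 = \delta/(1+\varepsilon)$. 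Density of $D \cap H_0$ in $H_0$ and contractivity of $T_t$ extend the estimate to all $g \in H$. The main obstacle, and precisely the point where \cite{GS12B} supersedes the purely algebraic treatment of \cite[Sec.~1.3]{DMS13}, is the second step: one must justify termwise differentiation and the $S,A,P,B$ decomposition even though $u(t)$ need not remain in $D$ or even in $D(S) \cap D(A)$. The resolution combines boundedness of $B$, the core property (D3), and closability of the constituent operators to extend, by density, the identities proved for $h \in D$.
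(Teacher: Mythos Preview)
Your proposal is correct and follows the same modified-entropy strategy as the paper: differentiate $\mathrm{H}_\varepsilon$ along the semigroup, use (H1)--(H4) to obtain the lower bound \eqref{Choosing_the_constants_in_hypocoercivity}, absorb the cross term via Young's inequality, and conclude by Gronwall and the norm equivalence \eqref{equivalence_H_norm}; your constants $\kappa_1,\kappa_2$ match the paper's exactly.

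One organizational difference is worth noting. You take $h\in D\cap H_0$ and then confront the fact that $u(t)=T_t h$ need not stay in $D$, which forces the density/closability patch you describe at the end. The paper instead takes $g\in D(L)$ from the outset: since $T_t$ preserves $D(L)$ and $1\in D(L)$ by (D7), the curve $f_t=T_t g-(g,1)_H$ remains in $D(L)$ for all $t$, so the differentiation $\frac{d}{dt}\mathrm{H}_\varepsilon[f_t]$ is immediate and one passes to general $g\in H$ by density of $D(L)$. The actual verification that (H1)--(H4), stated on the core $D$, propagate to yield the dissipation inequality on $D(L)$ is in both cases deferred to \cite[Sec.~2]{GS12B}. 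Also, your claim that $D\cap H_0$ is dense in $H_0$ is true but not entirely automatic: it uses that $D$ is dense and contains some $\phi$ with $(\phi,1)_H\neq 0$, so that $f_n-(f_n,1)_H\,\phi/(\phi,1)_H$ furnishes the approximants.
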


Later on, we are interested in deriving a dependence of $\kappa_1$ and $\kappa_2$ for the Langevin dynamics in terms of the damping parameter $\alpha$. For this purpose we need to recapitulate the proof of Theorem \ref{Thm_Hypocoercivity} from \cite[Theo.~2.18]{GS12B}; and see \cite{DMS13} for the original version of the proof.

\begin{proof}[Proof of Theorem \ref{Thm_Hypocoercivity}]
Let first $g \in D(L)$ and let $\mathrm{H}_\varepsilon[\cdot]$ be as in \eqref{Df_H_epsilon}. We define $(f_t)_{t \geq 0}$ as
\begin{align*}
f_t \df T_t g - \left( g, 1\right)_{\H} \quad \mbox{for all}\quad
t \geq 0.
\end{align*}
Now one needs to show that there exists a strictly positive constant $\kappa < \infty$ and a suitable $0 < \varepsilon <1$ (both independent of $g$) such that
\begin{align} \label{to_show_coercivity_of_D}
\mathrm{D}_\varepsilon [t]  := -\frac{d}{dt} \mathrm{H}_\varepsilon[f(t)]  \geq \kappa \,\|f(t)\|^2
\end{align}
holds for all $t \geq 0$. Indeed, assume the existence of such constants. By using \eqref{equivalence_H_norm} one obtains
\begin{align*}
\frac{d}{dt} \mathrm{H}_\varepsilon[f(t)] \leq -\frac{2 \kappa}{1
+ \varepsilon} \,\mathrm{H}_\varepsilon[f(t)] \quad \mbox{for
all}\quad t \geq 0.
\end{align*}
Gronwall's lemma and \eqref{equivalence_H_norm} then implies the
claim for $g \in D(L)$ with
$\kappa_1=\sqrt{\displaystyle\frac{1+\varepsilon}{1-\varepsilon}}$
and $\kappa_2 =\displaystyle\frac{\kappa}{1+\varepsilon}$. So, let
us verify the existence of the desired constants $\varepsilon$ and
$\kappa$ as required above. Therefore, the hypocoercivity
conditions (H1)--(H4) imply (see \cite[Sec.~1.3]{DMS13} or
\cite[Sec.~2]{GS12B}) that
\begin{align} \label{Choosing_the_constants_in_hypocoercivity}
\mathrm{D}_\varepsilon[t] &\geq \Lambda_m \|(I-P)f_t\|^2 +
\varepsilon \frac{\Lambda_M}{1+\Lambda_M} \|P f_t\|^2 -
\varepsilon (1+c_5)\, \| (I-P)f_t\| \| f_t\| \nonumber \\ & \geq
\left( \Lambda_m - \varepsilon (1+c_5) \left(1+\frac{1}{2 \delta}
\right) \right)\|(I-P)f_t\|^2 \\ & + \varepsilon \left(
\frac{\Lambda_M}{1+\Lambda_M} - (1+c_5) \frac{\delta}{2} \right)
\|P f_t\|^2, \nonumber
\end{align} %also \frac{1+\delta^2}{2 \delta}  is possible
where $c_5=c_1+c_2$ and $\delta > 0$ is arbitrary. Hence by fixing a suitable $\delta > 0$ and choosing $\varepsilon \in (0,1)$ small enough, observe that a constant $\kappa \in (0,\infty)$ can be found such that \eqref{to_show_coercivity_of_D} holds. Altogether, the statement is shown in case $g \in D(L)$. Note that the rate of convergence in terms of $\kappa_1$ and $\kappa_2$ is independent of $g \in D(L)$. Hence the claim follows by using denseness of $D(L)$ in $\H$.
\end{proof}

\begin{Rm}
For the Langevin dynamics later on, the constants $\kappa$ and $\varepsilon$ appearing in the proof of Theorem \ref{Thm_Hypocoercivity} are calculated in terms of the concrete constants $\Lambda_m$, $\Lambda_M$, $c_1$ and $c_2$ from the application. As seen in the previous proof, the choice of $\kappa$ and $\varepsilon$ determine the desired constants $\kappa_1$ and $\kappa_2$ explicitly.
\end{Rm}

\section{Hypocoercivity of the Langevin dynamics} \label{Section_Hypocoercivity_Langevin_equation}

As described in the introduction, the aim of this section is to prove exponential convergence to equilibrium
in our extended hypocoercivity framework
%establish hypocoercivity in our extended framework%
of the semigroup solving the abstract Kolmogorov equation corresponding to the classical Langevin
equation \eqref{Langevin_in _Rd_chapter_hypocoercivity}.  We remark that some specific calculations for
verifying (H1)--(H4) below are clearly simi\-lar to the associated original calculations for verifying Conditions (H1)--(H4) in the corresponding dual statement in the Fokker-Planck setting in \cite{DMS13}, see \cite[Theo.~10]{DMS13}. However, as already noticed before, in the proof of \cite[Theo.~10]{DMS13} domain issues are not taken into account. Crucial for our rigorous elaboration are the m-dissipativity and essential selfadjointness results derived in \cite[Cor.~2.3]{CG10} and \cite[Theo.~7]{BKR97} or \cite[Theo.~3.1]{Wie85}. We further remark that we additionally intend to compute the rate of convergence in dependence of the damping coefficient $\alpha \in (0,\infty)$ which is not done in \cite{DMS13}.

\subsection{The data conditions}

So, first of all we start introducing and verifying the conditions (D) from Section \ref{Hypocoercivity_method}. Recall that if $f$ is locally Lipschitz continuous on $\mathbb{R}^d$, then $f \in H^{1,\infty}_{\text{loc}}(\mathbb{R}^d)$ (see for instance \cite[Satz~8.5]{Alt06}). Moreover, $f$ is even differentiable $\d x$-a.e.~on $\mathbb{R}^d$ and the weak gradient $\nabla f$ coincides with the derivative of $f$ $\d x$-a.e.~on $\mathbb{R}^d$, see \cite[Theo.~6.15]{Hei01} and the proof of \cite[Theo.~6.17]{Hei01}. First we introduce the Hilbert space and our desired Kolmogorov backward operator associated to the Langevin equation \eqref{Langevin_in _Rd_chapter_hypocoercivity} under weak continuity assumptions on the potential $\Phi$. The following notations are used for the rest of this section without mention them again.

\begin{Df} \label{Df_operator_weak_ass_potential_Langevin_hypo} Let $d \in \mathbb{N}$ and $\alpha, \beta \in (0,\infty)$ in the Langevin equation \eqref{Langevin_in _Rd_chapter_hypocoercivity}. In the following, the first $d$ coordinates of $\mathbb{R}^{2d}$ are abbreviated with $x$ and the last $d$ coordinates by the variable $\omega$. The potential $\Phi \colon \mathbb{R}^d \to \mathbb{R}$ is assumed to be locally Lipschitz continuous and only depends on the position variable $x$. In the following, we fix a version of $\nabla \Phi = \nabla_x \Phi$. We introduce the measure space $(\mathbb{R}^{2d},\mathcal{B}(\mathbb{R}^{2d}),\mu_{\Phi,\beta})$ and the Hilbert space $\H$ as \index{$\mu_{\Phi,\beta}$}
\begin{align*}
\mu_{\Phi,\beta} \df e^{-\Phi(x)}\,\mathrm{d}x \otimes \nu_\beta,\quad \H \df L^2(\mathbb{R}^{2d},\mu_{\Phi,\beta}).
\end{align*}
Above $\nu_\beta$\index{$\nu_\beta$} denotes the normalized Gaussian measure on $\mathbb{R}^d$\index{Gaussian measure} with mean $0$ and covariance matrix $\beta^{-1} I$, see Section \ref{section_Introduction_Hypo}. Of course, we are only interested in potentials such that $\mu_{\Phi,\beta}$ is a finite measure. Thus w.l.o.g.~we assume $\mu_{\Phi,\beta}(\mathbb{R}^{2d})=1$ which equivalently means that $e^{-\Phi}\mathrm{d}x$ is a probability measure on $(\mathbb{R}^d,\mathcal{B}(\mathbb{R}^d))$. We introduce $\D$ as $\D \df C^\infty_c(\mathbb{R}^{2d})$ and the linear operators $(S,\D)$, $(A,\D)$ on the Hilbert space $\H$ by\index{Kolmogorov generator!Langevin dynamics}
\begin{align} \label{repr_S_A_L_Langevin_hypo}
A \df - \omega \cdot \nabla_x +  \frac{1}{\beta}\,\nabla_x \Phi
\cdot \nabla_\omega,\quad S \df - \alpha ~ \omega \cdot
\nabla_\omega +  \frac{\alpha}{\beta} \, \Delta_\omega \quad
\mbox{on}\quad \D.
\end{align}
Finally, the Langevin Kolmogorov operator $(L,D)$ is then defined by
\begin{align*}
L \df S-A \quad \mbox{on}\quad D.
\end{align*}
\end{Df}

Next, we introduce the desired projections $P$ and $P_S$.

\begin{Df} \label{Df_projections_P_PS_Langevin_hypo}
Assume the situation from Definition \ref{Df_operator_weak_ass_potential_Langevin_hypo}. Define $P_S\colon \H \to \H$ by
\begin{align*}
P_S  f \df \int_{\mathbb{R}^d} f\, \mathrm{d}\nu_\beta,\quad f \in \H.
\end{align*}
Here integration is understood w.r.t.~the $\omega$-coordinate. By using Fubini's theorem and the fact that $(\mathbb{R}^d,\mathcal{B}(\mathbb{R}^d),\nu_\beta)$ is a probability measure, one easily sees that $P_S$ is a well-defined orthogonal projection on $\H$ satisfying
\begin{align*}
P_S  f \in L^2(e^{-\Phi}\mathrm{d}x)\quad \mbox{and} \quad \| P_S  f\|_{L^2(e^{-\Phi}\mathrm{d}x)}=\| P_S f\|_{\H},\quad f \in \H.
\end{align*}
Here $L^2(e^{-\Phi}\mathrm{d}x)$ is canonically viewed as embedded in $L^2(\mu_{\Phi,\beta})$. Now $P\colon \H \to \H$ is given as
\begin{align*}
P f \df P_S   f - \left(f,1\right)_\H,\quad f \in \H.
\end{align*}
By using further that $\mu_{\Phi,\beta}(\mathbb{R}^{2d})=1$, one easily checks that $P$ is also an orthogonal projection fulfilling
\begin{align*}
P  f \in L^2(e^{-\Phi }\mathrm{d}x)\quad \mbox{and}\quad \| P  f\|_{L^2(e^{-\Phi}\mathrm{d}x)}=\| P f\|_{\H},\quad f \in \H.
\end{align*}
Finally, note that for each $f \in D$ the function $P_Sf$ admits a unique version from $C_c^\infty(\mathbb{R}^d)$. For notation convenience, we write
\begin{align*}
f_S \df P_S f \in C_c^\infty(\mathbb{R}^d),\quad f \in D.
\end{align*}
\end{Df}

Below we always make use of a suitable cut-off function as defined next. The choice of the cut-off function is standard, see e.g.~\cite[Prop.~5.5]{HN05}.

\begin{Df} \label{standard_cut_off_function} \index{standard cut-off function}
Let $k \in \mathbb{N}$. Choose some $\varphi \in C_c^\infty(\mathbb{R}^{k})$ such that $0 \leq \varphi \leq 1$, $\varphi =1$ on $B_1(0)$ and $\varphi=0$ outside $B_2(0)$.  Define
\begin{align*}
\varphi_n(z) \df \varphi(\frac{z}{n}) \quad \mbox{for each\quad $z
\in \mathbb{R}^{k}$,\quad  $n \in \mathbb{N}$}.
\end{align*}
Then there exists a constant $C < \infty$, independent of $n \in \mathbb{N}$, such that
\begin{align} \label{eq_boundedness_derivatives_varphi_n}
|\partial_{i} \varphi_n(z)| \leq  \frac{C}{n}, \quad
|\partial_{ij} \varphi_n(z )| \leq \frac{C}{n^2}\quad \mbox{for
all}\quad z \in \mathbb{R}^{k},\quad 1 \leq i,j \leq k.
\end{align}
Moreover, clearly $0 \leq \varphi_n \leq 1$ for all $n \in \mathbb{N}$ and $\varphi_n \to 1$ pointwisely on $\mathbb{R}^k$ as $n \to \infty$.
\end{Df}

The upcoming statement summarizes basic properties of the Langevin operator.

\begin{Lm} \label{Lm_properties_of_L_Langevin_hyp} Let $\Phi \colon \mathbb{R}^d \to\mathbb{R}$ be locally
Lipschitz continuous and let $(L,\D)$, $L=S-A$ on
$\D=C_c^\infty(\mathbb{R}^{2d})$,
$\H=L^2(\mathbb{R}^{2d},\mu_{\Phi,\beta})$ and the probability
measure $\mu_{\Phi,\beta}$ be as in
Definition~\ref{Df_operator_weak_ass_potential_Langevin_hypo}.
Then
\begin{itemize}
\item[(i)]
$(S,\D)$ is symmetric and nonpositive definite on $\H$.
\item[(ii)]
$(A,\D)$ is antisymmetric on $\H$.
\item[(iii)]
$\mu_{\Phi,\beta}$ is invariant for $(L,\D)$ in the sense that
\begin{align*}
\mu_{\Phi,\beta}(Lf) = \int_{\mathbb{R}^{2d}} Lf \,\mathrm{d}\mu_{\Phi,\beta} =0, \quad f \in D.
\end{align*}
\end{itemize}
Additionally, let $\nabla \Phi \in L^2(e^{-\Phi} \mathrm{d}x)$. Denote by $(L,D(L))$, $(S,D(S))$ and $(A,D(A))$ the closures of the dissipative operators $(L,D)$, $(S,D)$ and $(A,D)$ on $H$. Then
\begin{itemize}
\item[(iv)]
$P(\H) \subset D(S)$, $S P=0$ as well as $P(\D) \subset D(A)$ and $AP (\D) \subset D(A)$. Moreover, we have the natural formulas
\begin{align} \label{formula_AP_hypo_Langevin}
APf  = - \omega \cdot \nabla_x \,f_S,\quad f\in \D
\end{align}
as well as
\begin{align} \label{formula_AAP_hypo_Langevin}
A^2Pf  = \left(\omega, \nabla_x^2 \,f_S\, \omega \right)_{\text{euc}}  - \frac{1}{\beta} \, \nabla \Phi \cdot \nabla_x f_S, \quad f \in \D.
\end{align}
\item[(v)]
It holds $1 \in D(L)$ and $L1=0$.
\end{itemize}
\end{Lm}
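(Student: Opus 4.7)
The plan is to work through the five items in order, handling (i)--(iii) by direct integration by parts on compactly supported test functions and dealing with (iv)--(v) by explicit approximation with cut-off functions from Definition \ref{standard_cut_off_function}.

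For (i) and (ii), I would exploit the factorised structure of $\mu_{\Phi,\beta}$. Writing $(\alpha/\beta)\,e^{\beta \omega^2/2}\,\mathrm{div}_\omega\bigl(e^{-\beta \omega^2/2}\nabla_\omega f\bigr)=Sf$ recasts $S$ in divergence form with respect to the Gaussian in $\omega$, so for $f,g\in\D$ a single integration by parts in $\omega$ yields
\[
(Sf,g)_\H=-\frac{\alpha}{\beta}\int_{\mathbb{R}^{2d}}\nabla_\omega f\cdot\nabla_\omega g\,\d\mu_{\Phi,\beta},
\]
which is symmetric in $(f,g)$ and nonpositive when $g=f$. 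For $A$ one integrates the $-\omega\cdot\nabla_x$ term by parts in $x$ (picking up the $-\omega\cdot\nabla_x\Phi$ from differentiating $e^{-\Phi}$) and the $\tfrac1\beta\nabla_x\Phi\cdot\nabla_\omega$ term by parts in $\omega$ (picking up a $\omega\cdot\nabla_x\Phi$ from differentiating $e^{-\beta\omega^2/2}$). The two boundary terms cancel, leaving $(Af,g)_\H=-(f,Ag)_\H$. Compact support of $f,g$ makes all manipulations rigorous. Item (iii) then follows because the same two integration-by-parts calculations, with $g\equiv1$, show $\int Sf\,\d\mu_{\Phi,\beta}=0$ and $\int Af\,\d\mu_{\Phi,\beta}=0$ directly.

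For (iv) the work is in producing sequences in $\D$ that realise each inclusion via the closures of $S$ and $A$. To see $P(\H)\subset D(S)$ with $SP=0$, given $f\in\H$, approximate $Pf$, which depends only on $x$, by $h_n(x,\omega)=g_n(x)\varphi_n(\omega)$ with $g_n\in C_c^\infty(\mathbb R^d)$ converging to $Pf$ in $L^2(e^{-\Phi}\d x)$. Then $h_n\to Pf$ in $\H$ and $Sh_n=g_n(x)\bigl(-\alpha\omega\cdot\nabla_\omega\varphi_n+\tfrac{\alpha}{\beta}\Delta_\omega\varphi_n\bigr)$; the bounds \eqref{eq_boundedness_derivatives_varphi_n} together with $\int_{|\omega|\geq n}|\omega|^2\,\d\nu_\beta\to0$ force $\|Sh_n\|_\H\to0$. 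For $P(\D)\subset D(A)$ I use the Ansatz $h_n(x,\omega)=f_S(x)\varphi_n(\omega)$ and compute $Ah_n=-\varphi_n\,\omega\cdot\nabla_x f_S+\tfrac{1}{\beta}f_S\,\nabla_x\Phi\cdot\nabla_\omega\varphi_n$; the first summand converges in $\H$ to $-\omega\cdot\nabla_x f_S$ by dominated convergence, while the second is controlled by $\|f_S\nabla\Phi\|_{L^2(e^{-\Phi}\d x)}\cdot\|\nabla_\omega\varphi_n\|_{L^2(\nu_\beta)}$, which is finite thanks to $\nabla\Phi\in L^2(e^{-\Phi}\d x)$ and $f_S\in C_c^\infty$, and the second factor tends to $0$. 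This identifies $APf$ as in \eqref{formula_AP_hypo_Langevin}. For $AP(\D)\subset D(A)$ the same philosophy applies with $h_n(x,\omega)=-\omega\cdot\nabla_x f_S(x)\,\varphi_n(\omega)\in\D$; expanding $Ah_n$ via the product rule yields three terms: $\varphi_n(\omega,\nabla_x^2 f_S\,\omega)_{\text{euc}}$, $-\tfrac{1}{\beta}\varphi_n\,\nabla\Phi\cdot\nabla_x f_S$, and an error $-\tfrac{1}{\beta}(\omega\cdot\nabla_x f_S)(\nabla\Phi\cdot\nabla_\omega\varphi_n)$; the first two converge in $\H$ to the right-hand side of \eqref{formula_AAP_hypo_Langevin} and the error vanishes by the same Cauchy--Schwarz splitting as above.

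Finally, for (v) I approximate $1$ by $\psi_n(x,\omega)=\tilde\varphi_n(x)\varphi_n(\omega)$, where $\tilde\varphi_n,\varphi_n$ are the cut-offs from Definition \ref{standard_cut_off_function}. Since $e^{-\Phi}\d x$ and $\nu_\beta$ are probability measures, $\psi_n\to1$ in $\H$. For $L\psi_n=S\psi_n-A\psi_n$, the $S\psi_n$ piece is handled exactly as in (iv); in $A\psi_n$ the term $\varphi_n\,\omega\cdot\nabla_x\tilde\varphi_n$ is bounded in $\H$-norm by $\|\omega\|_{L^2(\nu_\beta)}\cdot(C/n)$, and the term $\tilde\varphi_n\,\nabla_x\Phi\cdot\nabla_\omega\varphi_n$ by $\|\nabla\Phi\|_{L^2(e^{-\Phi}\d x)}\cdot(C/n)$; both vanish as $n\to\infty$, proving $1\in D(L)$ and $L1=0$.

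I expect the main obstacle to be not the algebra but the bookkeeping of the approximations in (iv) and (v): every appearance of $\omega$ needs to be absorbed into the Gaussian $\nu_\beta$, every appearance of $\nabla_x\Phi$ needs the hypothesis $\nabla\Phi\in L^2(e^{-\Phi}\d x)$, and one must verify that the remainder terms produced by the cut-offs decay in $\H$ and not merely pointwise; this is what forces the two-variable product-of-cut-offs Ansatz rather than a single scalar truncation.
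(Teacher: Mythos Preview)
Your proposal is essentially the paper's own proof: integration by parts for (i)--(iii), and cut-off approximations via Definition~\ref{standard_cut_off_function} for (iv)--(v). The only substantive variation is that for (v) you use a product cut-off $\tilde\varphi_n(x)\varphi_n(\omega)$ directly, whereas the paper first establishes $C_c^\infty(\mathbb{R}^d)\subset D(L)$ and then cuts off in $x$ alone; both work.

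One small omission in (iv): your Ansatz $h_n(x,\omega)=f_S(x)\varphi_n(\omega)$ converges to $f_S=P_Sf$, not to $Pf=f_S-(f,1)_\H$, so what you have actually shown is $f_S\in D(A)$ with $Af_S=-\omega\cdot\nabla_x f_S$. To conclude $Pf\in D(A)$ and obtain \eqref{formula_AP_hypo_Langevin} you still need $1\in D(A)$ with $A1=0$. The paper handles this explicitly inside (iv) via the sequence $\psi_n(x,\omega)=\varphi_n(x)$; alternatively, your own argument in (v) already contains the required estimate $A\psi_n\to 0$, so you can simply lift that step forward.
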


\begin{proof} Properties (i)--(iii) can easily be verified using integration by parts, see for instance \cite[Lem.~4]{CG08} or \cite[Sec.~6.2]{Con11}. So, let us prove (iv) which contains calculations similar as performed in the proof of \cite[Lem.~3.7]{CG10}.

First let $f \in C_c^\infty(\mathbb{R}^d)$ and choose a sequence of cut-off functions $(\varphi_n)_{n \in \mathbb{N}}$ in $\mathbb{R}^d$ as in Definition \ref{standard_cut_off_function}. Define $f_n \in \D$, $n \in \mathbb{N}$, by
\begin{align} \label{f_n_first_choice_Lm_properties_of_L_Langevin_hyp}
f_n(x,\omega) \df f(x) \, \varphi_n(\omega),\quad (x,\omega) \in \mathbb{R}^{2d}.
\end{align}
Then by Lebesgue's dominated convergence theorem in combination with $|\omega|\in L^2(\nu_\beta)$ and the inequalities from \eqref{eq_boundedness_derivatives_varphi_n} we can infer that
\begin{align*}
Sf_n = \frac{\alpha}{\beta}\,f\, \Delta_\omega \,\varphi_n  -
\alpha\,f\, \omega \cdot \nabla_\omega \varphi_n \to 0 \quad
\mbox{with convergence in $\H$  as\quad $n \to \infty$.}
\end{align*}
This shows that $f \in D(S)$ and $Sf=0$ since $f_n \to f$ in $\H$ as $n \to \infty$ and $(S,D(S))$ is closed.

Now note that each element from the range of $P$ lies in $L^2(e^{-\Phi}\mathrm{d}x)$. So, choose an arbitrary $h \in L^2(e^{-\Phi}\mathrm{d}x)$. We have that $C_c^\infty(\mathbb{R}^d)$ is dense in $L^2(e^{-\Phi}\mathrm{d}x)$. Thus there exists $h_n \in C_c^\infty(\mathbb{R}^d)$, $n \in \mathbb{N}$, such that $h_n \to h$ in $L^2(e^{-\Phi}\mathrm{d}x)$ as $n \to \infty$. Now identify all $h_n$, $n \in \mathbb{N}$, and $h$ with elements from $\H$. By the previous consideration we have $h_n \in D(S)$ and $Sh_n=0$ for each $n \in \mathbb{N}$. Again from closedness of $(S,D(S))$ we can infer that $h \in D(S)$ and $Sh=0$. This shows $P(\H) \subset D(S)$ and $S P=0$.

Now let again $f \in C_c^\infty(\mathbb{R}^d)=P_S(\D)$ and define $(f_n)_{n \in \mathbb{N}}$ as in \eqref{f_n_first_choice_Lm_properties_of_L_Langevin_hyp}. Then dominated convergence implies
\begin{align*}
\omega \cdot \nabla_x f_n =  \varphi_n ~\omega \cdot \nabla_{x} f  \to \omega \cdot \nabla_{x} f,\quad \nabla \Phi \cdot \nabla_\omega f_n =  f ~\nabla \Phi \cdot \nabla_{\omega} \varphi_n  \to 0
\end{align*}
as $n \to \infty$ with convergence in $H$. Here we have used that $|\omega| \in L^2(\nu_\beta)$, the estimates from \eqref{eq_boundedness_derivatives_varphi_n} and $\nabla \Phi \in L^2(e^{-\Phi} \mathrm{d}x)$. Thus $f \in D(A)$ and we get $Af=-\omega \cdot \nabla_x f$. In order to show that $P(\D) \subset D(A)$ and to prove the first formula in (iv), it is left to show that $1 \in D(A)$ and $A1=0$. Therefore, use once more the closedness of $(A,D(A))$ and observe that the sequence $\psi_n(x,\omega) \df \varphi_n(x)$, $(x,\omega) \in \mathbb{R}^{2d}$, satisfies
\begin{align} \label{conservativity_of_A_Lm_properties_of_L_Langevin_hyp}
\psi_n \to 1, \quad A \psi_n = -\omega \cdot \nabla_x \psi_n  \to 0 \quad \mbox{with convergence in $\H$  as $n \to \infty$}
\end{align}
again due to  $|\omega| \in L^2(\nu_\beta)$, \eqref{eq_boundedness_derivatives_varphi_n} and by dominated convergence.

Next we show that $AP (\D) \subset D(A)$ and the second formula in (iv). Therefore, let $g$ be of the form $g=\omega_i \, f(x)$ where $f \in C_c^\infty(\mathbb{R}^{d})$. Here $\omega_i$ denotes the coordinate function $\mathbb{R}^{d} \ni \omega \mapsto \omega _i \in \mathbb{R}$ for some $1 \leq i \leq d$. Define $g_n$, $n \in \mathbb{N}$, by
\begin{align*}
g_n(x,\omega) \df \varphi_n(\omega) \, \omega_i \, f(x)\quad
\mbox{for\quad  $(x,\omega) \in \mathbb{R}^{2d}$}.
\end{align*}
Then again by dominated convergence in combination with $|\omega|,|\omega|^2 \in L^2(\nu_\beta)$,
\eqref{eq_boundedness_derivatives_varphi_n} and  $\nabla \Phi \in L^2(e^{-\Phi} \mathrm{d}x)$ we can infer that
\begin{align*}
\omega \cdot \nabla_x \,g_n = \varphi_n \, \omega_i ~\omega \cdot \nabla_x f \to \omega \cdot \nabla_x g
\end{align*}
as well as
\begin{align*}
\nabla \Phi \cdot \nabla_\omega g_n =f\,\omega_i\,\nabla\Phi \cdot \nabla_\omega \varphi_n + \varphi_n \, f \, \partial_{x_i} \Phi \to \nabla \Phi \cdot \nabla_\omega g
\end{align*}
with convergence in $H$ as $n \to \infty$ in each case. Thus by closedness of $(A,D(A))$ we conclude that each $g=\omega \cdot \nabla_x f$, $f \in C_c^\infty(\mathbb{R}^d)$, is an element from $D(A)$ and $A$ operates on $g$ in the natural way via the representation from $A$ as in \eqref{repr_S_A_L_Langevin_hypo}. Hence (iv) is shown.

Finally, we prove (v). However, this is now obvious. Therefore, let first $f \in C_c^\infty(\mathbb{R}^d)$ and $(f_n)_{n \in \mathbb{N}}$ be as in \eqref{f_n_first_choice_Lm_properties_of_L_Langevin_hyp} and let $(\psi_n)_{n \in \mathbb{N}}$ be as defined previously. Note that the calculations above also imply $f \in D(L)$ and
\begin{align*}
Lf=-Af= \omega \cdot \nabla_x f (x).
\end{align*}
In particular, as in \eqref{conservativity_of_A_Lm_properties_of_L_Langevin_hyp}, this identity yields $L \psi_n \to 0$ in $\H$ as $n \to \infty$ showing that $1 \in D(L)$ and $L1=0$ by closedness of $(L,D(L))$.
\end{proof}

Summarizing, besides (D3) all other data conditions are fulfilled. However, clearly (D3) is the hardest part and one has to prove essential m-dissipativity of the Langevin generator $(L,C_c^\infty(\mathbb{R}^{2d}))$ on $\H$. In case $\Phi \in C^\infty(\mathbb{R}^d)$ this is shown by Helffer and Nier in \cite[Prop.~5.5]{HN05} by using hypoellipticity techniques\index{hypoellipticity} and seems to be well-known to the community. In the article \cite{CG10}, essential m-dissipativity of $(L,C_c^\infty(\mathbb{R}^{2d}))$ on $H$ could even be established under more general assumptions on $\Phi$, see \cite[Cor.~2.3]{CG10}. More precisely, the result reads as follows.

\begin{Thm} \label{Thm_essent_m_dissipativity_Langevi_operator}\index{Langevin semigroup\\on $L^2(\mu_{\Phi,\beta})$}
Let $d \in \mathbb{N}$ and $\alpha,\beta \in (0,\infty)$. Assume that the potential $\Phi \colon \mathbb{R}^d \to \mathbb{R}$ is locally Lipschitz continuous and bounded from below. Then the generator of the Langevin dynamics $(L,C_c^\infty(\mathbb{R}^{2d}))$ from Definition \ref{Df_projections_P_PS_Langevin_hypo} is essentially m-dissipative on $H$. Thus its closure $(L,D(L))$ generates a strongly continuous contraction semigroup $(T_t)_{t \geq 0}$ on $H$.
\end{Thm}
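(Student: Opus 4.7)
The plan is to establish essential m-dissipativity via the Lumer--Phillips theorem. Dissipativity of $(L,C_c^\infty(\mathbb{R}^{2d}))$ on $H$ has already been recorded in Lemma~\ref{Lm_properties_of_L_Langevin_hyp}: indeed $(Lf,f)_H = (Sf,f)_H \leq 0$ for $f \in D$ since $S$ is symmetric nonpositive and $A$ is antisymmetric with respect to $\mu_{\Phi,\beta}$. What remains is to construct a closed dissipative extension $(\bar{L}, D(\bar{L}))$ that generates a $C_0$-contraction semigroup on $H$, and then to verify that $C_c^\infty(\mathbb{R}^{2d})$ is an operator core for it; equivalently, to show that $\mathrm{range}(\lambda - L)$ is dense in $H$ for some $\lambda > 0$.

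The candidate extension will come from the theory of generalized Dirichlet forms. The symmetric part $-S = -\tfrac{\alpha}{\beta}\Delta_\omega + \alpha\,\omega \cdot \nabla_\omega$ acts only in the velocity variable and is the Dirichlet operator of the closable symmetric form
\[
\mathcal{E}^{0}(f,g) = \frac{\alpha}{\beta}\int_{\mathbb{R}^{2d}} \nabla_\omega f \cdot \nabla_\omega g \, \d\mu_{\Phi,\beta}, \quad f,g \in C_c^\infty(\mathbb{R}^{2d}),
\]
which is a classical symmetric Dirichlet form on $H$ (a tensor product of the Ornstein--Uhlenbeck form on $L^2(\nu_\beta)$ with the identity on $L^2(e^{-\Phi}\mathrm{d}x)$). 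The antisymmetric part $-A = \omega \cdot \nabla_x - \tfrac{1}{\beta}\nabla_x\Phi \cdot \nabla_\omega$ is the Hamiltonian vector field and, by the invariance relation in Lemma~\ref{Lm_properties_of_L_Langevin_hyp}(iii), is divergence-free with respect to $\mu_{\Phi,\beta}$. This is precisely the structure covered by Stannat's generalized Dirichlet form framework: a coercive symmetric form perturbed by a $\mu$-divergence-free first-order drift. The general existence theorem then delivers a unique m-dissipative generator $(\bar{L}, D(\bar{L}))$ on $H$ extending $(L, C_c^\infty(\mathbb{R}^{2d}))$, whose associated $C_0$-semigroup is even sub-Markovian.

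The hardest step, and the real obstacle, is the core property: showing that $C_c^\infty(\mathbb{R}^{2d})$ is dense in $D(\bar{L})$ in the graph norm. The difficulty is that the drift coefficient $\nabla_x\Phi$ is only locally bounded (since $\Phi$ is only locally Lipschitz) and the transport coefficient $\omega$ grows linearly, so the standard smooth-coefficient arguments do not apply directly. For smooth potentials $\Phi \in C^\infty(\mathbb{R}^d)$ the core property follows from the hypoellipticity argument of Helffer--Nier \cite[Prop.~5.5]{HN05}. For the general locally Lipschitz case I would approximate $\Phi$ by mollifications $\Phi_k \in C^\infty(\mathbb{R}^d)$, uniformly bounded below and converging to $\Phi$ in $H^{1,2}_{\mathrm{loc}}(\mathbb{R}^d)$; for each $k$ the corresponding operator $L_k$ is essentially m-dissipative on $C_c^\infty(\mathbb{R}^{2d})$, yielding $u_k \in D(L_k)$ solving $(\lambda - L_k)u_k = f$ for prescribed $f \in H$. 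The uniform dissipativity bound $\|u_k\|_H \leq \lambda^{-1}\|f\|_H$ enables extraction of a weak limit $u \in H$, and a careful combination of the cut-offs of Definition~\ref{standard_cut_off_function} in both the $x$- and $\omega$-directions — exploiting $|\omega|, |\omega|^2 \in L^2(\nu_\beta)$, $\nabla\Phi \in L^2_{\mathrm{loc}}(\mathrm{d}x)$, and $e^{-\Phi} \in L^\infty_{\mathrm{loc}}(\mathrm{d}x)$ (which holds since $\Phi$ is bounded below) — identifies $u \in D(\bar{L})$ with $(\lambda - \bar{L})u = f$ and produces the required graph-norm approximation of $u$ by $C_c^\infty(\mathbb{R}^{2d})$-functions. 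Hence $\mathrm{range}(\lambda - L)$ is dense in $H$, and the Lumer--Phillips theorem delivers the $C_0$-contraction semigroup $(T_t)_{t\geq 0}$ generated by the closure $(L, D(L)) = (\bar{L}, D(\bar{L}))$.
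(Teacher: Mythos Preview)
The paper does not actually prove this theorem: it is quoted verbatim from \cite[Cor.~2.3]{CG10} (Conrad--Grothaus), with the $C^\infty$ case attributed to Helffer--Nier \cite[Prop.~5.5]{HN05}. So there is no in-paper argument to compare against; the relevant benchmark is the external proof in \cite{CG10}.

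Your outline is in the right spirit --- dissipativity plus range density via Lumer--Phillips, with the smooth case as an anchor --- but the approximation step contains a genuine gap. When you replace $\Phi$ by mollifications $\Phi_k$, the operator $L_k$ is dissipative on the \emph{different} Hilbert space $H_k = L^2(\mu_{\Phi_k,\beta})$, not on $H = L^2(\mu_{\Phi,\beta})$. The resolvent bound $\|u_k\|_{H_k} \leq \lambda^{-1}\|f\|_{H_k}$ is therefore in the wrong norm, and there is no reason for the norms $\|\cdot\|_{H_k}$ and $\|\cdot\|_H$ to be uniformly comparable: that would require a uniform \emph{upper} bound on $\Phi_k - \Phi$, which mollification of a merely locally Lipschitz function does not provide globally. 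Consequently neither the weak-compactness extraction nor the identification of the limit as a solution of $(\lambda - \bar L)u = f$ in $H$ goes through as written. The same issue obstructs the claim that $f \in H$ lies in the right-hand-side space for the approximate equations.

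The argument in \cite{CG10} avoids this by working on the \emph{fixed} space $H$ throughout: one keeps the measure $\mu_{\Phi,\beta}$ and instead perturbs only the drift coefficient $\nabla\Phi$ appearing in $A$, treating $L$ as a (non-symmetric, first-order) perturbation of the Ornstein--Uhlenbeck part $S$, for which essential m-dissipativity is established directly. The key technical input is a careful commutator/cut-off estimate showing that the antisymmetric transport term $\omega\cdot\nabla_x - \tfrac{1}{\beta}\nabla\Phi\cdot\nabla_\omega$ is a small-enough perturbation in the graph-norm sense, using only $\nabla\Phi \in L^2_{\mathrm{loc}}$ and the Gaussian moment bounds you already identified. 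If you want to salvage your potential-approximation route, you would need to pull everything back to the fixed measure $\mu_{\Phi,\beta}$ via the Radon--Nikodym weights $e^{\Phi-\Phi_k}$ and control those weights uniformly --- which essentially forces you back to the perturbation-of-drift viewpoint anyway.
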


\begin{Rm} \label{Rm_stochastic_representation_Langevin_semigroup}
In order to demonstrate that our Hilbert space hypocoercivity Kolmogorov setting is indeed natural, let us mention the following stochastic representation for the semigroup $(T_t)_{t \geq 0}$ associated with the closure of the Langevin generator $(L,C_c^\infty(\mathbb{R}^{2d}))$ on $\H$. Therefore, let the assumptions on $\Phi$ from Theorem \ref{Thm_essent_m_dissipativity_Langevi_operator} be satisfied. In \cite[Theo.~3]{CG08}, \cite[Theo.~2.5]{CG10} or \cite[Theo.~6.3.2]{Con11} combined with \cite[Lem.~2.2.8]{Con11}, it is shown that there exists a $\mu_{\Phi,\beta}$-tight Hunt process
\begin{align*}
\mathbf{M}=\left( \Omega, \mathcal{F}, (\mathcal{F}_t)_{t \geq 0},  (x_t,\omega_t)_{t \geq 0}, \mathbb{P}_{(x,\omega) \in \mathbb{R}^{2d}} \right)
\end{align*}
with infinite lifetime and continuous sample paths which is associated with $(T_t)_{t \geq 0}$ in the sense that $T_tf$, $t > 0$, is a $\mu_{\Phi,\beta}$-version of the transition semigroup
\begin{align*}
\mathbb{R}^{2d} \ni z \mapsto \mathbb{E}^z[f(x_t,\omega_t)]
\end{align*}
for any bounded $f\colon \mathbb{R}^{2d} \to \mathbb{R}$ with $f \in L^2(\mathbb{R}^{2d},\mu_\Phi)$. Moreover, $\mathbf{M}$ provides a martingale solution to the Langevin equation \eqref{Langevin_in _Rd_chapter_hypocoercivity} in the following sense: For quasi any starting point $(x,\omega) \in \mathbb{R}^{2d}$ the law $\mathbb{P}_{(x,\omega)}$ solves the martingale problem for $(L,C_c^2(\mathbb{R}^{2d}))$. Moreover, it even is a weak solution to the Langevin equation \eqref{Langevin_in _Rd_chapter_hypocoercivity}. For precise notations, we refer to the above mentioned references.

Summarizing, this shows the connection of the analytic hypocoercivity Kolmogorov approach with the original stochastic problem arised from SDE \eqref{Langevin_in _Rd_chapter_hypocoercivity}. We further remark that such stochastic representations can be established in general via using tools from the theory of (generalized) Dirichlet forms, see e.g.~\cite{Fuk80}, \cite{Fuk94}, \cite{MR92}, \cite{Roe99}, \cite{Tru00} or \cite{St99}.
\end{Rm}

\subsection{The hypocoercivity conditions}

Now we verify the hypocoercivity assumptions (H1)--(H4) for the
Langevin dynamics. Recall the notations from Definition
\ref{Df_operator_weak_ass_potential_Langevin_hypo}. First we
introduce the necessary conditions on the potential $\Phi$ that
are required below. Always let $\alpha,\beta \in (0,\infty)$.

\begin{Ass3} We need the following conditions.
\begin{itemize}
\item[(C1)] The potential $\Phi \colon \mathbb{R}^d \to \mathbb{R}$ is bounded from below, satisfies $\Phi \in C^{2}(\mathbb{R}^d)$ and $e^{-\Phi} \mathrm{d}x$ is a probability measure on $(\mathbb{R}^d,\mathcal{B}(\mathbb{R}^d))$.\smallskip
\item[(C2)]  The probability measure $e^{-\Phi}\mathrm{d}x$ satisfies a Poincar\'e inequality\index{Poincar\'e inequality} of the form
\begin{align*}
\left\|\nabla f \right\|^2_{L^2(e^{-\Phi}\mathrm{d}x)} \geq \Lambda  \, \left\| f - \left(f,1\right)_{L^2(e^{-\Phi}\mathrm{d}x)} \,\right\|^2_{L^2(e^{-\Phi}\mathrm{d}x)}
\end{align*}
for some $\Lambda \in (0,\infty)$ and all $f \in C_c^\infty(\mathbb{R}^d)$.\smallskip
\item[(C3)]  There exists a constant $c < \infty$ such that
\begin{align*}
\left| \nabla^2 \Phi (x) \right| \leq c \left( 1+ \left| \nabla
\Phi(x) \right|\right) \quad  \mbox{for all}\quad  x \in
\mathbb{R}^d.
\end{align*}
\end{itemize}
\end{Ass3}

Condition (C2) is necessary to show (H3) and in order to prove (H4) we essentially need Conditions (C2) and (C3). We remark that Condition (C3) together with the property that $e^{-\Phi} \mathrm{d}x$ is a probability measure indeed implies that $\nabla \Phi \in L^2(e^{-\Phi}\mathrm{d}x)$, see \cite[Lem.~A.24]{Vil09}. Moreover,
the Poincar\'e inequality is satisfied for instance if
\begin{align*}
\frac{|\nabla \Phi(x)|^2}{2} - \Delta \Phi(x) \stackrel{|x| \rightarrow \infty}{\longrightarrow} +\infty,
\end{align*}
see e.g.~\cite{BBCG08} or \cite[A.~19]{Vil09}. For further
references on  Poincar\'e inequalities, see \cite{Wan99}. Examples
for potentials fulfilling Conditions (C1)--(C3) (after
normalization) are e.g.~$\Phi=\|\cdot\|^p$, $p=2,4,$ or $p \geq
6$, since in these cases a Poincar\'e inequality is satisfied; see
e.g.~\cite{Wan99} or~\cite{RW01}.

Now let us start with the verification of (H1).

\begin{Pp} \label{Pp_H1_Langevin}
Let $\Phi$ be as in Definition \ref{Df_operator_weak_ass_potential_Langevin_hypo}. This means that $\Phi \colon \mathbb{R}^d \to \mathbb{R}$ is locally Lipschitz continuous and $e^{-\Phi} \mathrm{d}x$ is a probability measure on $(\mathbb{R}^d,\mathcal{B}(\mathbb{R}^d))$. Further assume $\nabla \Phi \in L^2(e^{-\Phi}\mathrm{d}x)$. Then (H1) holds.
\end{Pp}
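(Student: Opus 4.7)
The plan is to verify $PAPf = 0$ for every $f \in \mathcal{D} = C_c^\infty(\mathbb{R}^{2d})$ by a direct computation, exploiting the explicit formula for $AP$ already derived in Lemma~\ref{Lm_properties_of_L_Langevin_hyp}(iv) together with the fact that the Gaussian measure $\nu_\beta$ has zero first moment.

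More precisely, fix $f \in \mathcal{D}$. By Lemma~\ref{Lm_properties_of_L_Langevin_hyp}(iv) we already know $Pf \in D(A)$ and
\begin{align*}
APf = -\omega \cdot \nabla_x f_S,
\end{align*}
where $f_S = P_S f \in C_c^\infty(\mathbb{R}^d)$ depends only on the position variable $x$. Thus $APf$ is a sum of products of the coordinate functions $\omega_i$ with functions of $x$ alone.

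Next I would apply $P$ to this expression. Recall $Pg = P_S g - (g,1)_\H$ with $P_S g = \int_{\mathbb{R}^d} g \, \mathrm{d}\nu_\beta$ (integration in the $\omega$ variable). Since $\nu_\beta$ is the centered Gaussian on $\mathbb{R}^d$ with covariance $\beta^{-1} I$, we have $\int_{\mathbb{R}^d} \omega_i \, \mathrm{d}\nu_\beta(\omega) = 0$ for each $i$. Hence
\begin{align*}
P_S(APf)(x) = -\sum_{i=1}^d \partial_{x_i} f_S(x) \int_{\mathbb{R}^d} \omega_i \, \mathrm{d}\nu_\beta(\omega) = 0.
\end{align*}
Likewise, by Fubini (which applies since $|\omega|\, |\nabla_x f_S(x)| \, e^{-\Phi(x)}$ is integrable with respect to $\mathrm{d}x \otimes \nu_\beta$, using that $\nabla_x f_S$ has compact support and $|\omega| \in L^2(\nu_\beta)$),
\begin{align*}
(APf, 1)_\H = -\int_{\mathbb{R}^d} \nabla_x f_S(x) \cdot \left( \int_{\mathbb{R}^d} \omega \, \mathrm{d}\nu_\beta(\omega) \right) e^{-\Phi(x)} \, \mathrm{d}x = 0.
\end{align*}
Combining the two identities gives $PAPf = 0$, as desired.

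There is essentially no obstacle here: once one has the explicit representation \eqref{formula_AP_hypo_Langevin} for $APf$ on $\mathcal{D}$, the condition (H1) reduces to the vanishing of the first moment of $\nu_\beta$, together with the trivial Fubini justification. The only thing to be slightly careful about is that $Pf$ really lies in $D(A)$ (so that $APf$ is defined in the strong sense, not just formally); this is guaranteed by Lemma~\ref{Lm_properties_of_L_Langevin_hyp}(iv), which explicitly invokes the closure $(A,D(A))$ and requires only $\nabla \Phi \in L^2(e^{-\Phi}\mathrm{d}x)$ — precisely the standing hypothesis of the proposition.
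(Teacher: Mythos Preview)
Your proof is correct and follows essentially the same route as the paper: use the formula $APf=-\omega\cdot\nabla_x f_S$ from Lemma~\ref{Lm_properties_of_L_Langevin_hyp}(iv), then kill $P_S(APf)$ and $(APf,1)_\H$ via the vanishing first moment of $\nu_\beta$. The only cosmetic difference is that the paper deduces $(APf,1)_\H=0$ from $P_S APf=0$ via the identity $(g,1)_\H=(P_Sg,1)_{L^2(e^{-\Phi}\mathrm{d}x)}$, whereas you redo the Fubini computation directly; both are equivalent.
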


\begin{proof}
Let $f \in \D$. Then by the first formula from Lemma \ref{Lm_properties_of_L_Langevin_hyp}\,(iv) we have
\begin{align*}
A P f= - \omega \cdot \nabla_x f_S,\quad \mbox{where} \quad
f_S=P_S f \in C_c^\infty(\mathbb{R}^d).
\end{align*}
Thus we conclude $P_S A Pf =0$ since
\begin{align*}
\int_\mathbb{\mathbb{R}^d} \left(\omega, z\right)_{\text{euc}} \,
\mathrm{d}\nu_\beta=0 \quad \mbox{for all}\quad z \in
\mathbb{R}^d.
\end{align*}
Then also
\begin{align*}
\left(A Pf,1\right)_\H=\left(P_S A Pf,1\right)_{L^2(e^{-\Phi} \mathrm{d}x)}=0.
\end{align*}
Hence $P A P = 0$ on $\D$ as desired.
\end{proof}

\begin{Pp} \label{Pp_H2_Langevin}
Let $\Phi$ be as in Definition \ref{Df_operator_weak_ass_potential_Langevin_hypo}. Then Condition (H2) is satisfied with $\Lambda_m = \alpha$.
\end{Pp}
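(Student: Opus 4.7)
The plan is to exploit the fact that $S$ acts only in the $\omega$-variable and is, up to the factor $\alpha$, a copy of the Ornstein--Uhlenbeck generator on $L^2(\nu_\beta)$. For fixed $x$, microscopic coercivity reduces to the spectral gap of the Gaussian measure $\nu_\beta$, which has Poincar\'e constant $\beta$; the factors $\alpha/\beta$ in front of $\Delta_\omega$ and $\beta$ from the Poincar\'e inequality will cancel to give exactly $\Lambda_m = \alpha$.

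First I would fix $f \in \D = C_c^\infty(\mathbb{R}^{2d})$ and compute $-(Sf,f)_\H$ by integration by parts in the $\omega$-variable, treating $x$ as a parameter. The crucial observation is that the Gaussian density $\nu_\beta(\d \omega) = (2\pi\beta^{-1})^{-d/2} e^{-\beta\omega^2/2}\,\d\omega$ satisfies $\nabla_\omega \log \nu_\beta = -\beta\,\omega$, so the drift term $-\alpha\,\omega\cdot\nabla_\omega$ is exactly the one generated when integrating by parts against $\frac{\alpha}{\beta}\Delta_\omega$ with respect to $\nu_\beta$. This yields, together with Fubini applied to the product structure $\mu_{\Phi,\beta} = e^{-\Phi(x)}\d x \otimes \nu_\beta$, the identity
\begin{equation*}
-(Sf,f)_\H \;=\; \frac{\alpha}{\beta}\int_{\mathbb{R}^d} e^{-\Phi(x)}\!\left(\int_{\mathbb{R}^d} |\nabla_\omega f(x,\omega)|^2\,\d\nu_\beta(\omega)\right)\!\d x.
\end{equation*}

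Next I would invoke the Poincar\'e inequality for the Gaussian $\nu_\beta$ on $\mathbb{R}^d$, which holds with constant $\beta$: for any smooth $g$ with $g(\omega), |\nabla g(\omega)| \in L^2(\nu_\beta)$,
\begin{equation*}
\int_{\mathbb{R}^d} |\nabla g|^2 \,\d\nu_\beta \;\geq\; \beta \int_{\mathbb{R}^d}\Bigl(g - \!\int g\,\d\nu_\beta\Bigr)^{\!2}\d\nu_\beta.
\end{equation*}
Applying this pointwise in $x$ to the slice $g(\omega) = f(x,\omega)$, and noting that $\int f(x,\cdot)\,\d\nu_\beta = (P_S f)(x)$ by Definition \ref{Df_projections_P_PS_Langevin_hypo}, the inner $\omega$-integral is bounded below by $\beta \int (f(x,\omega) - P_S f(x))^2\,\d\nu_\beta(\omega)$.

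Finally, another application of Fubini gives
\begin{equation*}
-(Sf,f)_\H \;\geq\; \alpha \int_{\mathbb{R}^{2d}} |(I-P_S)f|^2\,\d\mu_{\Phi,\beta} \;=\; \alpha\,\|(I-P_S)f\|^2,
\end{equation*}
proving (H2) with $\Lambda_m = \alpha$. I do not expect any serious obstacle: the integration by parts in $\omega$ is justified by $f \in C_c^\infty$, and the Poincar\'e inequality for $\nu_\beta$ is classical. The only point requiring mild care is the measurability/integrability needed to invoke Fubini, but since $f$ has compact support in $\mathbb{R}^{2d}$ and $\Phi$ is continuous, all integrands are bounded and the application is routine.
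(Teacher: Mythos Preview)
Your proposal is correct and follows essentially the same route as the paper: integrate by parts in $\omega$ to obtain $-(Sf,f)_\H = \frac{\alpha}{\beta}\|\nabla_\omega f\|_\H^2$, then apply the Gaussian Poincar\'e inequality for $\nu_\beta$ (with constant $\beta$) to conclude $-(Sf,f)_\H \geq \alpha\,\|(I-P_S)f\|^2$. The paper's proof is just a more compressed version of exactly this argument.
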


\begin{proof}
The Poincar\'e  inequality for the Gaussian measure, see
\cite{Be89}, easily implies
\begin{align*}
\big\|\nabla_\omega f \big\|^2_{L^2(\nu_\beta)} \geq \beta \,
\left\| f- \int_{\mathbb{R}^d} f(\omega) \,
\mathrm{d}\nu_\beta(\omega) \right\|^2_{L^2(\nu_\beta)} \quad
\mbox{for all}\quad f \in C_c^\infty(\mathbb{R}^{d}).
\end{align*}
In other words, we obtain
\begin{align*}
-\left(Sf,f\right)_H = \frac{\alpha}{\beta} \,\big\|\nabla_\omega
f \big\|^2_{H} \geq \alpha \, \| f- P_Sf \|^2_{H} \quad \mbox{for
each}\quad f \in D.
\end{align*}
The claim follows.
\end{proof}

Next, we calculate the operator $G \df PA^2P$ on $D$. Below we need Condition (C1). Let us therefore already assume it and let $\nabla \Phi \in L^2(e^{-\Phi}\mathrm{d}x)$. By the second formula from Lemma \ref{Lm_properties_of_L_Langevin_hyp}\,(iv) we obtain
\begin{align*}
P_SA^2Pf = \frac{1}{\beta}\,\Delta_x f_S - \frac{1}{\beta} \, \nabla \Phi \cdot \nabla_x f_S, \quad f \in D.
\end{align*}
For the moment, consider the operator $(T,C_c^\infty(\mathbb{R}^d))$ on the Hilbert space $L^2(e^{-\Phi}\mathrm{d}x)$ defined by $T= \Delta_x -  \nabla_x \Phi \cdot \nabla_x$ on $C_c^\infty(\mathbb{R}^d)$. Then for each $h \in C^\infty_c(\mathbb{R}^d)$ and $g \in C^\infty(\mathbb{R}^d)$ it holds using integration by parts
\begin{align*}
\left( Th, g \right)_{L^2(e^{-\Phi} \mathrm{d}x)} = -  \int_{\mathbb{R}^d} \nabla h \cdot \nabla g ~e^{-\Phi} \mathrm{d}x.
\end{align*}
In particular, we have $\left( Th, 1\right)_{L^2(e^{-\Phi} \mathrm{d}x)}=0$. Thus, since $f_S \in C^\infty_c(\mathbb{R}^d)$, we conclude
\begin{align*}
\left(A^2 P f, 1\right)_\H= \left(P_S A^2 P f, 1\right)_{L^2(e^{-\Phi} \mathrm{d}x)}=\frac{1}{\beta}\,\left(T f_S,1 \right)_{L^2(e^{-\Phi} \mathrm{d}x)}=0.
\end{align*}
So, for each $f \in \D$, we obtain the formula
\begin{align} \label{formula_PAAP_Langevin}
P A^2 P f = \frac{1}{\beta} \left( \Delta f_S -  \nabla \Phi \cdot \nabla f_S \right).
\end{align}
In order to verify (H3), we need the upcoming statement first.

\begin{Pp} \label{density_of_PAAPD_hypo_Langevin}
Assume that the potential $\Phi\colon \mathbb{R}^d \to \mathbb{R}$ fulfills Condition (C1) and assume $\nabla \Phi \in L^2(e^{-\Phi}\mathrm{d}x)$. Then $(I-P A^2 P)(\D)$ is dense in $\H$. This means that $(G,\D)$ is essentially m-dissipative on $\H$, hence essentially selfadjoint on $\H$.
\end{Pp}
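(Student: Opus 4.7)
The plan is to reduce the essential m-dissipativity of $(G,\D)$ on $\H$ to the essential selfadjointness of the generalized Schr\"odinger operator $T\df \Delta - \nabla\Phi \cdot \nabla$ with domain $C_c^\infty(\mathbb{R}^d)$ on $L^2(e^{-\Phi}\d x)$. By formula \eqref{formula_PAAP_Langevin}, for each $f \in \D$ the image $Gf = \tfrac{1}{\beta}\,Tf_S$ depends only on $x$, where $f_S = P_Sf \in C_c^\infty(\mathbb{R}^d)$. Thus $G$ really acts on $\D$ through the composition $f \mapsto f_S \mapsto \tfrac{1}{\beta}Tf_S$ performed purely on the $x$-factor, and the density question naturally decouples according to the tensor product $\H \cong L^2(e^{-\Phi}\d x)\otimes L^2(\nu_\beta)$.

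I would establish density of $(I-G)(\D)$ in $\H$ by a duality argument. Suppose $g \in \H$ satisfies $\left(g,(I-G)f\right)_\H = 0$ for every $f \in \D$; the task is to conclude $g=0$. First I would test against product functions $f(x,\omega) = u(x)v(\omega)$ with $u,v \in C_c^\infty(\mathbb{R}^d)$ and $\int v\,\d\nu_\beta = 0$. Any such $f$ gives $f_S \equiv 0$ and hence $Gf=0$, so the orthogonality collapses to $\left(g,uv\right)_\H = 0$. Combining density of tensor products in $\H$ with the orthogonal decomposition $L^2(\nu_\beta) = \mathbb{R}\cdot 1 \oplus \{v\in L^2(\nu_\beta): \int v\,\d\nu_\beta = 0\}$ then forces $g = P_Sg$, so $g$ is canonically identified with an element of $L^2(e^{-\Phi}\d x)$.

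Second, once $g$ depends only on $x$, the $\H$-pairing against $x$-dependent objects collapses to the $L^2(e^{-\Phi}\d x)$-pairing, and the original orthogonality becomes $\left(g,\,(I - \tfrac{1}{\beta}T)f_S\right)_{L^2(e^{-\Phi}\d x)} = 0$ for every $f \in \D$. Since $f_S$ sweeps out all of $C_c^\infty(\mathbb{R}^d)$ as $f$ sweeps out $\D$, this is precisely $g \perp (I-\tfrac{1}{\beta}T)(C_c^\infty(\mathbb{R}^d))$ in $L^2(e^{-\Phi}\d x)$. Invoking the essential selfadjointness of $(T,C_c^\infty(\mathbb{R}^d))$ on $L^2(e^{-\Phi}\d x)$ provided by \cite[Theo.~3.1]{Wie85} or \cite[Theo.~7]{BKR97} --- applicable under (C1) together with $\nabla\Phi \in L^2(e^{-\Phi}\d x)$ --- and combining with the nonpositivity of the closure of $T$, one gets that $(I-\tfrac{1}{\beta}T)(C_c^\infty(\mathbb{R}^d))$ is dense in $L^2(e^{-\Phi}\d x)$, forcing $g=0$.

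The essential selfadjointness of $T$ drawn from \cite{Wie85,BKR97} is the one nontrivial input, which is precisely why that result is highlighted as indispensable in the introduction; once it is available, the main obstacle is the cosmetic one of organising the tensor product decomposition of $\H$ carefully enough to project the orthogonality onto the $x$-factor, and this is handled by the product-function testing described above.
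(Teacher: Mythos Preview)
Your argument is correct and uses the same key input as the paper --- the essential selfadjointness of $(T,C_c^\infty(\mathbb{R}^d))$ on $L^2(e^{-\Phi}\d x)$ from \cite{Wie85,BKR97} --- together with the same overall duality scheme (take $g$ orthogonal to $(I-G)(\D)$ and show $g=0$). The tactical organisation, however, differs slightly. You first test against products $u(x)v(\omega)$ with $\int v\,\d\nu_\beta=0$ to force $g=P_Sg$, and only then invoke essential selfadjointness on the $x$-factor to get $g=0$. The paper instead tests against $f(x)\varphi_n(\omega)$ with the standard cut-off $\varphi_n\to 1$, passes to the limit by dominated convergence to obtain $((\beta I-T)f,P_Sg)_{L^2(e^{-\Phi}\d x)}=0$ directly, concludes $P_Sg=0$, and then observes $(Gf,g)_\H=\tfrac{1}{\beta}(Tf_S,P_Sg)=0$ so that the original orthogonality collapses to $g\perp\D$. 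Your tensor-product decoupling is arguably cleaner conceptually and avoids any limiting argument; the paper's cut-off route avoids having to check that mean-zero $C_c^\infty$ functions are dense in $L^2_0(\nu_\beta)$. Both reach the same conclusion with comparable effort.
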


\begin{proof} First recall that for densely defined, symmetric and dissipative linear operators on a Hilbert space, the property of being essential m-dissipative is equivalent to essential selfadjointness. Now let $(T,C_c^\infty(\mathbb{R}^d))$ be as defined above. By \cite[Theo.~7]{BKR97} or \cite[Theo.~3.1]{Wie85} our assumptions in particular imply that $(T,C_c^\infty(\mathbb{R}^d))$ is essentially selfadjoint on $L^2(e^{-\Phi} \mathrm{d}x)$. Hence $(T,C_c^\infty(\mathbb{R}^d))$ is also essentially m-dissipative on $L^2(e^{-\Phi} \mathrm{d}x)$. Now let $g \in \H$ such that
\begin{align} \label{eq_verification_core_H3_hypo_Langevin}
((I-G)f,g)_\H=0 \quad \mbox{for all}\quad  f \in \D.
\end{align}
We have to show that $g=0$. Choose $f \in C_c^\infty(\mathbb{R}^d)$ and let $(f_n)_{n \in \mathbb{N}}$, $(\varphi_n)_{n \in \mathbb{N}}$ be as in \eqref{f_n_first_choice_Lm_properties_of_L_Langevin_hyp}. Then Identity \eqref{eq_verification_core_H3_hypo_Langevin} implies
\begin{align*}
0=((I-G)f_n,g)_{\H} = (\varphi_n f,g)_{\H} - \frac{1}{\beta}\,\|\varphi_n\|_{L^1(\nu_\beta)} (Tf,g)_{\H} \to (f,g)_{\H} - \frac{1}{\beta}\,(Tf,g)_{\H}
\end{align*}
as $n \to \infty$ by dominated convergence. Hence
\begin{align*}
((\beta I-T)f,P_S g)_{L^2(e^{-\Phi} \mathrm{d}x)}=((\beta
I-T)f,g)_{\H}=0 \quad \mbox{for all}\quad f \in
C_c^\infty(\mathbb{R}^d).
\end{align*}
Thus $P_S g=0$ in $L^2(e^{-\Phi} \mathrm{d}x)$ since $(\beta I-T)(C_c^\infty(\mathbb{R}^d))$ is dense in $L^2(e^{-\Phi} \mathrm{d}x)$. So, for each $f \in \D$ we can infer that
\begin{align*}
(Gf,g)_\H=\frac{1}{\beta} \, \left(Tf_S, P_S g\right)_{L^2(e^{-\Phi} \mathrm{d}x)}=0.
\end{align*}
Consequently, \eqref{eq_verification_core_H3_hypo_Langevin} yields $(f,g)_\H=0$ for each $f \in \D$. Hence $g=0$ as desired.
\end{proof}

Now we prove (H3).

\begin{Pp} \label{Pp_H3_Langevin}
Assume that $\Phi \colon \mathbb{R}^d \to \mathbb{R}$ satisfies
(C1) and (C2) and assume that $\nabla \Phi \in
L^2(e^{-\Phi}\mathrm{d}x)$. Then (H3) holds where $\Lambda_M =
\displaystyle\frac{\Lambda}{\beta}$.
\end{Pp}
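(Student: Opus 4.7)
The essential selfadjointness of $(G,\D)$ on $\H$ is exactly the content of Proposition \ref{density_of_PAAPD_hypo_Langevin}, which applies since we assume (C1) and $\nabla\Phi\in L^2(e^{-\Phi}\d x)$. Hence the plan reduces to establishing the inequality \eqref{eq_inequality_D3} with constant $\Lambda_M=\Lambda/\beta$ on $\D=C_c^\infty(\mathbb{R}^{2d})$.

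My strategy is to translate the inequality on $\H$ into the Poincar\'e inequality on $L^2(e^{-\Phi}\d x)$ supplied by (C2). Let $f\in\D$ and write $f_S=P_S f\in C_c^\infty(\mathbb{R}^d)$. By the first formula of Lemma~\ref{Lm_properties_of_L_Langevin_hyp}\,(iv) we have $APf=-\omega\cdot\nabla_x f_S$. Since $\nabla_x f_S$ is a function of $x$ alone, Fubini combined with the second moment identity
\begin{align*}
\int_{\mathbb{R}^d}\omega_i\,\omega_j\,\d\nu_\beta(\omega)=\frac{1}{\beta}\,\delta_{ij}
\end{align*}
for the centered Gaussian $\nu_\beta$ with covariance $\beta^{-1}I$ yields
\begin{align*}
\|APf\|_\H^2=\int_{\mathbb{R}^d}\left(\int_{\mathbb{R}^d}(\omega\cdot\nabla_x f_S)^2\,\d\nu_\beta\right)e^{-\Phi(x)}\,\d x=\frac{1}{\beta}\,\|\nabla f_S\|_{L^2(e^{-\Phi}\d x)}^2.
\end{align*}

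Next I identify $Pf$ as an element of $L^2(e^{-\Phi}\d x)$. By Definition~\ref{Df_projections_P_PS_Langevin_hypo} and Fubini, $(f,1)_\H=\int_{\mathbb{R}^d} f_S\,e^{-\Phi}\d x=(f_S,1)_{L^2(e^{-\Phi}\d x)}$, whence $Pf=f_S-(f_S,1)_{L^2(e^{-\Phi}\d x)}$, and moreover $\|Pf\|_{L^2(e^{-\Phi}\d x)}=\|Pf\|_\H$ by the same definition. Applying the Poincar\'e inequality (C2) to $f_S\in C_c^\infty(\mathbb{R}^d)$ gives
\begin{align*}
\|\nabla f_S\|_{L^2(e^{-\Phi}\d x)}^2\;\geq\;\Lambda\,\bigl\|f_S-(f_S,1)_{L^2(e^{-\Phi}\d x)}\bigr\|_{L^2(e^{-\Phi}\d x)}^2=\Lambda\,\|Pf\|_\H^2.
\end{align*}
Combining the two displays yields $\|APf\|_\H^2\geq (\Lambda/\beta)\,\|Pf\|_\H^2$, as required.

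There is no real obstacle here: once the formula for $APf$ from Lemma~\ref{Lm_properties_of_L_Langevin_hyp}\,(iv) is used, the proof is just the Gaussian integration in $\omega$ (producing the factor $1/\beta$) followed by an application of (C2) in the $x$-variable. The only small bookkeeping item is the correct identification of $Pf$ as the zero-mean version of $f_S$ in $L^2(e^{-\Phi}\d x)$ and of the two norms on $P(\H)$, both of which are built into Definition~\ref{Df_projections_P_PS_Langevin_hypo}.
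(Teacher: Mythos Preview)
Your proof is correct and follows essentially the same route as the paper: invoke Proposition~\ref{density_of_PAAPD_hypo_Langevin} for the essential selfadjointness of $(G,\D)$, compute $\|APf\|_\H^2=\frac{1}{\beta}\|\nabla f_S\|_{L^2(e^{-\Phi}\d x)}^2$ via the Gaussian second moment, and then apply the Poincar\'e inequality (C2) to $f_S$ after identifying $Pf=f_S-(f_S,1)_{L^2(e^{-\Phi}\d x)}$. The only difference is that you spell out the covariance identity $\int\omega_i\omega_j\,\d\nu_\beta=\beta^{-1}\delta_{ij}$ and the bookkeeping for $Pf$ a bit more explicitly than the paper does.
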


\begin{proof}
Let $f \in \D$. By the Poincar\'e inequality for the probability measure $e^{-\Phi}\mathrm{d}x$ from (C2) we have
\begin{align*}
\| A P f \|^2_H &= \int_{\mathbb{R}^d} \int_{\mathbb{R}^d} \left( \omega \cdot \nabla_x f_S \right)^2 \,e^{-\Phi} \mathrm{d}\nu_\beta(\omega) \,\mathrm{d}x = \frac{1}{\beta} \int_{\mathbb{R}^d} \left| \nabla_x f_S \right|^2 \, e^{-\Phi} \mathrm{d}x \\
& \geq \frac{\Lambda}{\beta} \int_{\mathbb{R}^d} \left(f_S - \int f_S \,e^{-\Phi}  \mathrm{d}x \right)^2 e^{-\Phi} \mathrm{d}x = \frac{\Lambda}{\beta}\,\left\| P_S f - \left(f,1\right)_\H\right\|^2_H .
\end{align*}
So, Inequality \eqref{eq_inequality_D3} is fulfilled for all elements from $\D$. Together with Proposition \ref{density_of_PAAPD_hypo_Langevin}, Condition (H3) indeed follows.
\end{proof}

It is left to verify Condition (H4). Therefore, we need an
elliptic \textit{a priori} estimates from Dolbeault, Mouhot and
Schmeiser (see \cite{DMS13}) which especially requires all
Conditions (C1)--(C3) from above.

\begin{Pp} \label{Pp_H4_Langevin} Assume that $\Phi \colon \mathbb{R}^d \to \mathbb{R}$ satisfies (C1),(C2) and (C3).
Then Condition (H4) is satisfied. Moreover, the constants therein
are given by $c_1= \displaystyle\frac{1}{2} \alpha$ and $c_2 =
c_{\Phi,\beta}$ where $c_{\Phi,\beta} \in [0,\infty)$ depends on
the choice of $\Phi$ and $\beta$.
\end{Pp}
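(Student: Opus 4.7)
The plan is to verify the two inequalities in (H4) separately by appealing to Lemma \ref{Lemma_sufficient_H4}; part (i) handles $\|BSf\|$ and part (ii) handles $\|BA(I-P)f\|$.

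For the first inequality, observe that $S=-\alpha\,\omega\cdot\nabla_\omega+\frac{\alpha}{\beta}\Delta_\omega$ maps $\D=C_c^\infty(\mathbb{R}^{2d})$ into itself, so in particular $S(\D)\subset D(A)$. I would then compute $PA$ on $\D$ starting from \eqref{formula_AP_hypo_Langevin}: since $(Af,1)_\H=-(f,A1)_\H=0$ one has $PAf=P_SAf$, and a Gaussian integration by parts in the $\omega$-variable (using $\int\partial_{\omega_k}h\,\d\nu_\beta=\beta\int\omega_k h\,\d\nu_\beta$) puts this into the compact divergence form
\begin{equation*}
PAf=-e^{\Phi}\,\mathrm{div}_x\bigl(e^{-\Phi}u_f\bigr),\qquad u_f(x):=\int_{\mathbb{R}^d}\omega\,f(x,\cdot)\,\d\nu_\beta.
\end{equation*}
A further Gaussian integration by parts (exploiting that the adjoint of $\partial_{\omega_k}$ in $L^2(\nu_\beta)$ is $\beta\omega_k-\partial_{\omega_k}$) delivers the key commutation identity
\begin{equation*}
\int_{\mathbb{R}^d}\omega_i\,(Sh)\,\d\nu_\beta=-\alpha\int_{\mathbb{R}^d}\omega_i\,h\,\d\nu_\beta,\qquad h\in C_c^\infty(\mathbb{R}^{2d}),
\end{equation*}
whence $u_{Sf}=-\alpha\,u_f$ and therefore $PASf=-\alpha\,PAf$ on $\D$. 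Lemma \ref{Lemma_sufficient_H4}\,(i) with $c_3=-\alpha$ then produces $c_1=\frac{\alpha}{2}$.

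For the second inequality, essential selfadjointness of $(G,\D)$ has already been established in Proposition \ref{density_of_PAAPD_hypo_Langevin}. Starting from \eqref{formula_AAP_hypo_Langevin}, I would integrate the $\omega$-variable against $\nu_\beta$ first, using standard Gaussian fourth-moment computations for the quadratic form $(\omega,\nabla_x^2 f_S\,\omega)_{\mathrm{euc}}$ together with $(a+b)^2\leq 2a^2+2b^2$, to bound
\begin{equation*}
\|A^2Pf\|_\H^2\leq C_{d,\beta}\Bigl(\|\nabla^2 f_S\|_{L^2(e^{-\Phi}\d x)}^2+\|\nabla\Phi\cdot\nabla f_S\|_{L^2(e^{-\Phi}\d x)}^2\Bigr)
\end{equation*}
with a constant $C_{d,\beta}$ depending only on $d$ and $\beta$. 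Applying $P_S$ to the relation $g=(I-G)f$ shows that $f_S$ solves $(\beta I-T)f_S=\beta\,P_Sg$ with $T=\Delta-\nabla\Phi\cdot\nabla$, and $\|P_Sg\|_{L^2(e^{-\Phi}\d x)}\leq\|g\|_\H$ is built into Definition \ref{Df_projections_P_PS_Langevin_hypo}.

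The main obstacle is the Dolbeault-Mouhot-Schmeiser elliptic a priori estimate: under (C1)--(C3) there exists a constant $K_\Phi<\infty$ such that
\begin{equation*}
\|\nabla^2 u\|_{L^2(e^{-\Phi}\d x)}^2+\|\nabla\Phi\cdot\nabla u\|_{L^2(e^{-\Phi}\d x)}^2\leq K_\Phi\,\|(\beta I-T)u\|_{L^2(e^{-\Phi}\d x)}^2
\end{equation*}
for every $u\in C_c^\infty(\mathbb{R}^d)$. This is the only step where condition (C3) genuinely enters: it is used to absorb the quadratic $|\nabla\Phi|^2$ terms arising after integrating $|Tu|^2$ by parts into the Hessian and $\nabla\Phi\cdot\nabla u$ terms on the left. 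Feeding this estimate with $u=f_S$ into the previous displays yields $\|A^2Pf\|_\H\leq c_{\Phi,\beta}\,\|g\|_\H$ for a constant depending only on $\Phi$ and $\beta$, and Lemma \ref{Lemma_sufficient_H4}\,(ii) completes the argument with $c_2=c_{\Phi,\beta}$.
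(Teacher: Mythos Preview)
Your proposal is correct and follows essentially the same route as the paper: both parts are handled via Lemma~\ref{Lemma_sufficient_H4}, the first using that $\omega_i$ is an eigenfunction of the Ornstein--Uhlenbeck operator $S$ with eigenvalue $-\alpha$, the second by bounding $\|A^2Pf\|$ through \eqref{formula_AAP_hypo_Langevin} and then invoking the Dolbeault--Mouhot--Schmeiser elliptic a~priori estimate for the solution of $(\beta I-T)f_S=\beta P_Sg$.

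Two small remarks. In part~(i) the paper argues dually, computing $(Sg,APf)_\H$ directly and applying the symmetry of $S$ to $\omega\cdot\nabla_x f_S$, whereas you first derive the divergence-form expression for $PA$; both reduce to $S\omega_i=-\alpha\omega_i$, but the paper's version is a bit shorter. Note also that you obtain $c_3=-\alpha$ while the paper states $PAS=\alpha\,PA$ (i.e.\ $c_3=+\alpha$): your sign is the correct one---the paper silently drops the minus in $APf=-\omega\cdot\nabla_x f_S$---but since $c_1=\tfrac12|c_3|$ this does not affect the conclusion.
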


\begin{proof}
For the verification of (H4) we aim to apply Lemma \ref{Lemma_sufficient_H4}. First note that $S(\D) \subset \D$. We show that $PAS=\alpha\, PA$ on $\D$. This is clearly equivalent to
\begin{align*}
\left( Sg , APf \right)_{\H}=\alpha \,\left( g, APf \right)_{\H}
\quad \mbox{for all}\quad f,g\in \D.
\end{align*}
Indeed, the latter identity holds since
\begin{align*}
\left( Sg , APf \right)_{\H} &= \int_{\mathbb{R}^{2d}}
\left(\frac{\alpha}{\beta} \, \Delta_\omega g - \alpha ~\omega
\cdot \nabla_\omega g\right) \, \omega \cdot \nabla_x f_S ~
\mathrm{d}\mu_{\Phi,\beta} \\ &=\int_{\mathbb{R}^{2d}} g
\left(\frac{\alpha}{\beta} \,\Delta_\omega - \alpha~\omega \cdot
\nabla_\omega\right) \left( \omega \cdot \nabla_x f_S \right) \,
\mathrm{d}\mu_{\Phi,\beta}\\ &= - \alpha \int_{\mathbb{R}^{2d}} g
~ \omega \cdot \nabla_x f_S  \, \mathrm{d}\mu_{\Phi,\beta},
\end{align*}
where integration by parts has been used. Thus $PAS=\alpha \, PA$ on $\D$ and the first part of (H4) is satisfied by Lemma \ref{Lemma_sufficient_H4}\,(i). We prove the second part of (H4). Therefore, let $g \in \H$ be of the form $g=(I-PA^2P)f$ for some $f \in \D$. The second formula from Lemma \ref{Lm_properties_of_L_Langevin_hyp}\,(iv) implies
\begin{align} \label{estimation_BA*_Langevin}
\| A^2 f_P \| &\leq \left\| \left| \omega \right|^2
\right\|_{L^2(\nu_\beta)} \left\|  |\nabla_x^2  f_P|
\right\|_{L^2(e^{-\Phi}\mathrm{d}x)} +  \frac{1}{\beta} \, \left\|
\left|\nabla_x \Phi \right| \left|\nabla_x f_P\right|
\right\|_{L^2(e^{-\Phi}\mathrm{d}x)},
\end{align}
where $f_P \df f_S-\left(f_S,1\right)_{L^2(e^{-\Phi} \mathrm{d}x)} = Pf$ with $f_S \in C_c^\infty(\mathbb{R}^d)$. Now due to Identity \eqref{formula_PAAP_Langevin} note that $f_P$  solves the elliptic equation
\begin{align*}
f_P -  \frac{1}{\beta} \left(\Delta f_P - \nabla \Phi \cdot \nabla
f_P\right) =Pg \quad \mbox{in}\quad L^2(e^{-\Phi} \mathrm{d}x).
\end{align*}
By applying the elliptic \textit{a priori} estimates of Dolbeault, Mouhot and Schmeiser from \cite[Sec.~2, Eq.~(2.2), Lem.~8]{DMS13} (or see \cite[Appendix, Sec.~5.1]{GS12B} for corresponding proofs including domain issues) to the right hand side of Inequality \eqref{estimation_BA*_Langevin} we conclude
\begin{align*}
\|(BA)^*g\|_{\H}  &\leq c_{\Phi,\beta} \,\| Pg\|_{L^2(e^{-\Phi} \mathrm{d}x)} \leq c_{\Phi,\beta} \,\|g\|_\H
\end{align*}
for a constant $c_{\Phi,\beta} < \infty$ independent of $g$ and only depending on the choice of $\Phi$ and $\beta$.
Note that the \textit{a priori} estimates require Conditions (C1)--(C3). Finally, apply
Lemma~\ref{Lemma_sufficient_H4}\,(ii) to finish the proof.
\end{proof}

Altogether, we are are able to verify Theorem \ref{Hypocoercivity_theorem_Langevin}.

\begin{proof}[Proof of Theorem \ref{Hypocoercivity_theorem_Langevin}]
% \phantomsection\label{proof_hypo_rate_Langevin}
Collecting all results from the whole section, Theorem \ref{Thm_Hypocoercivity} implies the statement. Indeed, the hypocoercivity data conditions are fulfilled by Lemma \ref{Lm_properties_of_L_Langevin_hyp} and Theorem \ref{Thm_essent_m_dissipativity_Langevi_operator}. The hypocoercivity conditions (H1) up to (H4) are fulfilled due to Proposition \ref{Pp_H1_Langevin}, Proposition \ref{Pp_H2_Langevin}, Proposition \ref{Pp_H3_Langevin} and Proposition \ref{Pp_H4_Langevin}.

It is left to compute the rate of convergence in dependence of $\alpha$ as claimed in the statement. Therefore, we go back into the proof of Theorem \ref{Thm_Hypocoercivity}, modify the latter and try to choose the constants $\delta \in (0,\infty)$, $\varepsilon \in (0,1)$ and $\kappa \in (0,\infty)$ explicitly therein. The following calculations are analogous to (and basically taken from) the ones of the proof of Theorem 1 in \cite[Sec.~3.4]{DKMS11}. In \cite{DKMS11} namely, the hypocoercivity strategy from \cite{DMS13} is applied to the so-called two-dimensional fiber lay-down model with emphasis on calculating the rate of convergence in dependence of the so-called noise amplitude; So, below we use the notations introduced in the proof of Theorem \ref{Thm_Hypocoercivity} with our specific values for $\Lambda_m$, $\Lambda_M$, $c_1$, $c_2$ and we follow \cite[Sec.~3.4]{DKMS11}. We set
\begin{align*}
\delta \df \frac{\Lambda}{\beta + \Lambda} \, \frac{1}{1+c_{\Phi,\beta} + \frac{\alpha}{2}}.
\end{align*}
Now the coefficients of the right hand side of \eqref{Choosing_the_constants_in_hypocoercivity} can be written as $\alpha - \varepsilon \, {r}_{\Phi,\beta}(\alpha)$ and $\varepsilon \, {s}_{\Phi,\beta}$ where
\begin{align*}
r_{\Phi,\beta}(\alpha) \df \big(1+c_{\Phi,\beta} + \frac{\alpha}{2}\big) \big(1+\frac{\beta + \Lambda}{2\, \Lambda}
\big(1+c_{\Phi,\beta} + \frac{\alpha}{2}\big)\big),\quad s_{\Phi,\beta} \df \frac{1}{2} \, \frac{\Lambda}{\beta + \Lambda}.
\end{align*}
Here $\varepsilon =: \varepsilon_{\Phi,\beta}(\alpha) \in (0,1)$ needs still to be determined. Note that ${r}_{\Phi,\beta}(\alpha) + s_{\Phi,\beta}$ is of the form
\begin{align*}
{r}_{\Phi,\beta}(\alpha) + s_{\Phi,\beta} = a_1 + a_2 \, \alpha +
a_3 \alpha^2,
\end{align*}
where all $a_i \in (0,\infty)$, $i=1,\ldots,3$, depend on the choice of $\Phi$ and $\beta$. One defines
\begin{align} \label{Definition_og_overline_varepsilon}
\overline{\varepsilon}_{\Phi,\beta} (\alpha) \df
\frac{\alpha}{r_{\Phi,\beta}(\alpha) +
s_{\Phi,\beta}}=\frac{\alpha}{a_1 + a_2\, \alpha + a_3 \,
\alpha^2}.
\end{align}
Note that $\overline{\varepsilon}_{\Phi,\beta}(\alpha)$ is in general not the right choice for $\varepsilon$ since possibly $\overline{\varepsilon}_{\Phi,\beta}(\alpha) \geq 1$. Now let $\upsilon > 0$ be arbitrary. Define
\begin{align*}
\varepsilon \df \frac{\upsilon}{1+ \upsilon} \,
\frac{\overline{\varepsilon}_{\Phi,\beta}(\alpha)}
{\overline{\varepsilon}_{\Phi,\beta,\max}} \quad \mbox{with}\quad
\overline{\varepsilon}_{\Phi,\beta,\max} \df \max\{ 1,
\sup_{\alpha > 0} \overline{\varepsilon}_{\Phi,\beta}(\alpha)\}.
\end{align*}
Now really $0 < \varepsilon < 1$ and note that $\overline{\varepsilon}_{\Phi,\beta,\max}$ is well-defined due to \eqref{Definition_og_overline_varepsilon}. Then
\begin{align*}
\varepsilon \, r_{\Phi,\beta}(\alpha) + \, \varepsilon \, s_{\Phi,\beta} =\frac{\upsilon}{1+ \upsilon} \, \frac{\alpha}{\overline{\varepsilon}_{\Phi,\beta,\max}} \leq \alpha.
\end{align*}
Hence we get the estimation
\begin{align*}
\alpha - \varepsilon \,r_{\Phi,\beta}(\alpha) \geq \varepsilon \,
s_{\Phi,\beta} = \frac{\upsilon}{1+ \upsilon}
\,\frac{2\,\alpha}{n_1 + n_2\, \alpha + n_3 \, \alpha^2} =: \kappa
,
\end{align*}
where all $n_i \in (0,\infty)$ depend on $\Phi$ and $\beta$ and are given by
\begin{align*}
n_i\df 2\,\frac{\overline{\varepsilon}_{\Phi,\beta,\max}}{
s_{\Phi,\beta}}~a_i \quad \mbox{for each}\quad i=1,\ldots,3.
\end{align*}
Summarizing, the desired constant $\kappa \in (0,\infty)$ as required in the proof of Theorem \ref{Thm_Hypocoercivity} is found. From the proof of Theorem \ref{Thm_Hypocoercivity} we can infer that
\begin{align*}
\left\|\,T_t g - \int_{\mathbb{R}^{2d}} g \, \mathrm{d}\mu_{\Phi,\beta} \,\right\|_{L^2(\mathbb{R}^{2d},\mu_{\Phi,\beta})} \leq \kappa_1 e^{-\kappa_2 \,t}  \left\|\,g - \int_{\mathbb{R}^{2d}} g \, \mathrm{d}\mu_{\Phi,\beta} \,\right\|_{L^2(\mathbb{R}^{2d},\mu_{\Phi,\beta})}
\end{align*}
for each $g \in L^2(\mu_{\Phi,\beta})$ and each $t \geq 0$. Here
$\kappa_1 =
\sqrt{\displaystyle\frac{1+\varepsilon}{1-\varepsilon}}$ and
$\kappa_2 =\displaystyle\frac{\kappa}{1+\varepsilon}$. Finally, it
is easily verified that $\displaystyle
\frac{1+\varepsilon}{1-\varepsilon} \leq (1+\upsilon)^2$ and
$\kappa_2 \geq \displaystyle\frac{1}{2}\,\kappa$. So, via setting
\begin{align*}
\nu_1 \df 1+ \upsilon\quad \mbox{and} \quad \nu_2 \df
\displaystyle\frac{1}{2}\kappa
\end{align*}
the concrete rate of convergence claimed in the theorem is shown.
\end{proof}

Finally, we conclude with a remark as in \cite[Rem.~2.12]{GS13}
and \cite[Rem.~3.18]{GS13}.

\begin{Rm}\label{Rm_dependence_of_rate}
%\begin{itemize}
%\item[(i)]
({\it{i}}).~The rate of convergence in dependence of $\alpha$ is
expected by the following heuristic considerations. Observe that
for small values of $\alpha$ close to zero one has a bad or very
slow decay towards $\mu_{\Phi,\beta}$ since the dynamics nearly
behaves deterministic in this situation. Vice versa, in a large
damping regime, the $(x_t)_{t \geq 0}$ process can be described
approximately by the overdamped Langevin dynamics, see
\cite[Sec.~2.2.4]{LRS10}. The scaling
\begin{align*}
\overline{t} = \frac{t}{\alpha}, \quad
\overline{x}_{\overline{t}}=x_t, \quad \overline{W}_{\overline{t}}
= \frac{1}{\sqrt{\alpha}} W_t, \quad
\overline{\omega}_{\overline{t}} = \alpha \, \omega_t,\quad
\overline{\Phi}(\overline{x}) = \Phi(x)
\end{align*}
formally yields
\begin{align*}
\d \overline{x}_{\overline{t}} =
\overline{\omega}_{\overline{t}}\, \d \overline{t} ,\quad
\frac{1}{\alpha^2} \d \overline{\omega}_{\overline{t}} = -
\overline{\omega}_{\overline{t}} \, \d \overline{t} -
\frac{1}{\beta} \, \nabla
\overline{\Phi}(\overline{x}_{\overline{t}})\,\d \overline{t} +
\sqrt{\frac{2}{\beta}}\, \d \overline{W}_{\overline{t}}.
\end{align*}
Thus $\displaystyle\frac{1}{\alpha^2} \d
\overline{\omega}_{\overline{t}} \to 0$ as $\alpha \uparrow
\infty$. So, setting $\displaystyle\frac{1}{\alpha^2} \d
\overline{\omega}_{\overline{t}} = 0$ for $\alpha$ large, solving
the equation w.r.t.~$\d \overline{x}_{\overline{t}} =
\overline{\omega}_{\overline{t}} \, \d \overline{t} $ and
rescaling yields the SDE in $\mathbb{R}^d$ given as
\begin{align} \label{overdamped_Langevin_Highfriction}
\mathrm{d}x_t = -\frac{1}{\alpha\,\beta} \, \nabla \Phi(x_t) \,
\mathrm{d}t + \sqrt{\frac{2}{\alpha \, \beta}}\, \mathrm{d}W_t.
\end{align}
with formal generator $L^{\text{ov}}=
\displaystyle\frac{1}{\alpha\,\beta}\Delta
-\displaystyle\frac{1}{\alpha\,\beta} \nabla \Phi \cdot \nabla$.
If $\Phi$ fulfills e.g.~(C1) and (C2) with $\Lambda
> 0$ the constant from the Poincar\'e inequality, it well-known
(and easy to verify) that the s.c.c.s.~$(S_t)_{t \geq 0}$ in
$L^2(e^{-\Phi}\d x)$ associated with the closure of
$(L^{\text{ov}},C_c^\infty(\mathbb{R}^d))$ satisfies
\begin{align*}
\left\| S_t f - \int f \, e^{-\Phi}
\mathrm{d}x\right\|_{L^2(e^{-\Phi}\mathrm{d}x)} \leq
e^{-\frac{\Lambda}{\alpha \beta} t} \left\|  f - \int f \,
e^{-\Phi} \mathrm{d}x\right\|_{L^2(e^{-\Phi}\mathrm{d}x)}
\end{align*}
for each $f \in L^2(e^{-\Phi}\mathrm{d}x)$. Altogether, the
convergence rate for the Langevin dynamics is expected to become
as worse as possible when $\alpha \uparrow \infty$. So, we see
that these phenomena on the convergence to equilibrium in
dependence of $\alpha > 0$ are rigorously proven and confirmed by
Theorem \ref{Hypocoercivity_theorem_Langevin} from the
introduction. Compare with \cite[Theo.~2.11]{GS13} where the same
qualitative convergence behavior for the Langevin dynamics in
dependence of $\alpha$ in an ergodicity setting is shown.
%\item[(ii)]

({\it{ii}}). As seen in this article, our Hilbert space
hypocoercivity setting can successfully be applied to the
classical degenerate Langevin dynamics. Another interesting
problem is the application of the hypocoercivity setting to
investigate the longtime behavior of the manifold-valued version
of the degenerate Langevin equation. This generalized version of
the Langevin equation is derived e.g.~in \cite{GS12} and in
\cite{LRS12} (where it is called the constrained Langevin
dynamics). The interest for studying the longtime behavior and
establish hypocoercivity of the geometric version of the Langevin
equation arised in \cite[Prop.~3.2]{LRS12} where an ergodic
statement for the constrained Langevin dynamics is outlined
without convergence rate.
%\end{itemize}
\end{Rm}

\end{document}